\documentclass[12pt]{article}
\usepackage[utf8]{inputenc}
\usepackage[centertags]{amsmath}
\usepackage{hyperref}
\usepackage{amsfonts}
\usepackage{amssymb}
\usepackage{latexsym}
\usepackage{amsthm}
\usepackage{newlfont}
\usepackage{graphicx}
\usepackage{listings}
\usepackage{booktabs}
\usepackage{abstract}
\usepackage{enumerate}
\usepackage{xcolor}
\usepackage[normalem]{ulem}
\usepackage{authblk} 
\RequirePackage{srcltx}
\date{}

\newlength{\defbaselineskip}
\newcommand{\setlinespacing}[1]%
           {\setlength{\baselineskip}{#1 \defbaselineskip}}

\newcommand{\actaqed}{\hfill $\actabox$}
{\medskip\noindent \textit{Proof of #1. }}%
{\actaqed \medskip}

\def\cC{{\mathcal C}}
\def\cD{{\mathcal D}}
\def\D{{\mathcal D}}

\def\cR{{\mathcal R}}
\def\cS{{\mathcal S}}

\def\cX{{\mathcal X}}

\def\bbC{{\mathbb C}}

\def\bbN{{\mathbb N}}
\def\N{{\mathbb N}}

\def\R{{\mathbb R}}

\def\bbT{{\mathbb T}}

\def\bx{\mathbf x}

 \def \<{\langle}
\def\>{\rangle}

\def \Og{\Omega}

\def \e{\varepsilon}

\def \de{\delta}

\def \ff{\varphi}

\def\vi{\varphi}

\def \sp{\operatorname{span}}

\newtheorem{Theorem}{Theorem}[section]
\newtheorem{Lemma}{Lemma}[section]
\newtheorem{Definition}{Definition}[section]
\newtheorem{Proposition}{Proposition}[section]
\newtheorem{Remark}{Remark}[section]

\newtheorem{Corollary}{Corollary}[section]
\numberwithin{equation}{section}

\newtheorem{OldTheorem}{Theorem}[section]

\newcommand{\be}{\begin{equation}}
\newcommand{\ee}{\end{equation}}

\title{Some theoretical and practical results on noisy signals recovery}

\author[1]{M. Makurin\thanks{makurin@gmail.com}}
\author[2,3,4]{Yu. Malykhin\thanks{malykhin-yuri@yandex.ru}}
\author[2,4,5]{K. Ryutin\thanks{kriutin@yahoo.com}}
\author[2,3,4,5]{V.~Temlyakov\thanks{temlyakovv@gmail.com}}

\affil[1]{Moscow RTT Laboratory, Moscow, Russia}
\affil[2]{Marchuk Institute of Numerical Mathematics of the Russian Academy of Sciences, Moscow, Russia}
\affil[3]{Steklov Mathematical Institute of Russian Academy of
Sciences, Russia}
\affil[4]{Lomonosov Moscow State University, Russia}
\affil[5]{Moscow Center of Fundamental and Applied Mathematics, Russia}

\begin{document}

\maketitle

\begin{abstract}{In this paper we discuss some theoretical results and their applications to specific practical problems from 
wireless communications. We consider both the Hilbert and the Banach  space settings. We assume that we deal with a noisy  version of a signal (clean signal), which is  sparse  with respect to a given system of elements (dictionary). We assume that we know its noisy version  at a finite number of points and we want to approximately recover it. This  problem of recovery of a noisy signal is closely related to 
the problem of establishing the Lebesgue-type inequalities for the corresponding algorithms and it motivates us to prove such inequalities.  Under certain conditions on a dictionary, which are popular in practical applications (RIP-type condition, coherence condition) we obtain different kinds of the Lebesgue-type inequalities for the OMP and its version WOMP algorithms. The most important feature of our new theoretical result is the assumption  that the dictionary has the RIP-type property instead of the assumption that it is the Riesz basis, which was used in the previous results. Our approach  allows us to treat redundant (overcomplete) systems, which is important in applications.

We also consider the setting of sparse recovery
for highly-coherent dictionaries that appear in  OFDM   setting   for  wireless
communication. Our main example is the oversampled
Fourier dictionary and  the recovery of the frequency response of a sparse channel. We develop a  general approach to such problems and we prove that under certain conditions the OMP
algorithm recovers all dictionary elements with
large coefficients, and estimate its accuracy  (in NMSE metric) in terms of signal-to-noise ratio.
We also impose extra (in addition to being sparse) assumptions on the clean signal. In particular, these extra assumptions are expressed in the form of fast decay of coefficients of an expansion or in the form of an assumption that the clean signal is a linear combination of elements with much better than for the whole dictionary coherence properties. We show that under these assumptions the OMP and some other greedy-type algorithms work well. For completeness we discuss some known 
results on  the Lebesgue-type inequalities in Banach spaces.

}
\end{abstract}

\section{Introduction}
\label{In}

In this paper we discuss some theoretical results and their applications to specific practical problems from the
wireless communications. Mostly, we discuss the Hilbert space setting  and present some theoretical results on the Banach space setting in Section \ref{Disc}. We begin with a description of  the problem of approximate recovery of noisy signals.   We present it in the Banach space setting. Then we formulate some known results from the greedy approximation and compressed sensing theories and explain their connections to the noisy signals recovery problem.  After that we discuss some new results on the noisy signals recovery, which address specific problems from the wireless communications. The reader can learn about history or other  aspects of these theories from \cite{VT211},\cite{FR}.

{\bf Motivation. The channel estimation problem.}
Our paper grew out  of  our studies of  channel  estimation problem from the wireless communication theory.  We give some  details of this problem in Subsection \ref{sub_channel}.

{\bf General setting. Assumptions on the noise.} Let $X$ be a separable Banach space with the norm $\|\cdot\|_X$.  Usually, in the greedy approximation and compressed sensing literature the noisy data is understood in the deterministic sense. Namely, we assume that our original (clean) signal $f^*$ has some good properties  but we measure the signal $f$, which is a noisy version of $f^*$:
\be\label{In1}
\|f-f^*\|_X \le \epsilon. 
\ee
Normally, we want to recover our clean signal with small error measured in the norm of the space $X$. Clearly, if the theory guarantees a bound for $\|f-a(f)\|_X$ for an approximant $a(f)$ then for the clean signal recovery error we have
\be\label{In2}
\|f^* -a(f)\|_X \le \|f -a(f)\|_X + \|f-f^*\|_X \le \|f -a(f)\|_X + \epsilon. 
\ee
Along with the standard assumption (\ref{In1}) on the noise there are other versions of that assumption, which are used in the literature. 

In the greedy approximation and compressed sensing theories we use  a system $\cD$ of elements of $X$ for approximating a given signal $f\in X$. This system is called a dictionary and we assume that it has two properties: (I) $\cD=\{g\}$, $\|g\|_X\le1$ for all $g\in \cD$, (II) the closure (in $X$) of the span of $\cD$ coincides with $X$.  In the case of a Hilbert space $\mathcal{H}$ we introduce a new norm, associated with a dictionary $\cD$,  by the formula
\begin{equation}
    \label{maxcorr_norm}
\|f\|_\cD:=\sup_{g\in\cD} |\<f,g\>|,\quad f\in \mathcal{H}.
\end{equation}
It turns out that the norm $\|\cdot\|_\cD$ is useful in measuring the noise by imposing the assumption
\be\label{In5h}
\|f-f^*\|_\cD \le \epsilon. 
\ee

{\bf General setting. Assumptions on the clean signal.} We list some typical assumptions.

{\bf A1. Strict sparsity.} Usually, {\it sparse representation} of a signal (function) $h$ means that there is a given system of functions (dictionary) $\cD$ such that $h$ has a  representation as a linear combination of no greater than (or exactly) $K$ elements from the dictionary ($h$ is $K$-sparse with respect to $\cD$):
$$
h=\sum_{j=1}^K c_jg_j,\quad g_j\in\cD,\quad j=1,\dots,K.
$$
We denote the set of all such vectors $h$ as $\Sigma_K(\cD).$

 It is clear that in the case of a finite dictionary the assumption {\bf A1} is equivalent to the assumption that $\sigma_K(h,\cD)=0$, where 
 $$
\sigma_K(h,\cD)_X := \inf_{g\in \Sigma_K(\cD)} \|h-g\|_X
$$
is the best $K$-term approximation of $h$. 
A weaker form of the sparsity assumption {\bf A1} is the following.

{\bf A2. Approximate sparsity.} We describe sparsity properties of $h$ by the rate of decay of the sequence $\{\sigma_m(h,\cD)\}_{m=1}^\infty$ of best $m$-term approximations of $h$ with respect to $\cD$. In other words, this assumption means that $h$ belongs to some approximation class. 

The following geometrical assumption turns out to be very useful in studying nonlinear sparse approximations.

{\bf A3. Compressibility.} Assume that $h$ has a representation
$$
h=\sum_{j=1}^\infty c_jg_j,\quad g_j\in\cD,\quad j=1,\dots,\quad \sum_{j=1}^\infty|c_j| \le R.
$$
In other words this means that $h$ belongs to the generalised octahedron of the symmetrised version of the dictionary $\cD$. 
It is convenient for us to consider along with the dictionary $\cD$ its symmetrization. In the case of real Banach spaces we 
denote 
$$
\cD^{\pm} := \{\pm g \, : \, g\in \cD\}.
$$
In the case of complex Banach spaces we denote 
$$
\cD^{\circ} := \{e^{i\theta} g \, : \, g\in \cD,\quad \theta \in [0,2\pi)\}.
$$
In the above notation $\cD^{\circ}$ symbol $\circ$ stands for the unit circle. 

{\bf A4. Fast decay of coefficients.} For a given $q\in (0,1)$ assume that $h\in X$  has an expansion  
$$
h= \sum_{k=0}^\infty a_k g_k,\quad g_k\in \cD,\quad |a_k| \le q^k,\quad k=0,1,\dots.
$$

{\bf General setting. Assumptions on the dictionary. Hilbert space.} In the majority of cases we need to impose restrictions on the dictionary in order to be able to prove theoretical results. We present some standard assumptions on the dictionary.  Let $\cD$ be a dictionary in a Hilbert space $\mathcal{H}$. We define the coherence parameter of this dictionary in the following way
$$
M(\cD):=M(\cD,\mathcal{H}):= \sup_{g\neq h;g,h\in\cD} |\<h,g\>|.
$$
Very often it is difficult to calculate the coherence parameter $M(\cD)$ exactly. For this reason, it is convenient to use 
some majorant $\mu$, i.e. a number such that $M(\cD) \le \mu$. 

We now proceed to the RIP (Restricted Isometry Property) dictionaries, which are popular in compressed sensing. 
\begin{Definition}\label{InD1} A dictionary $\cD:=\{g\}$ is called the Riesz dictionary with depth $u$ and parameters $0<R_1\le R_2 <\infty$ in a Hilbert space $\mathcal{H}$,  if, for any $u$ distinct elements $g_1,\dots,g_u$ of the dictionary and any coefficients $a=(a_1,\dots,a_u)$, we have
\begin{equation}\label{In6}
R_1\|a\|_2 \le \left\|\sum_{i=1}^u a_ig_i\right\|_\mathcal{H}\le R_2\|a\|_2.
\end{equation}
We denote the class of Riesz dictionaries with depth $u$ and parameters $0<R_1\le R_2 <\infty$  by $\mathcal{R}(u,R_1,R_2)$.  If $\cD$ is a basis and (\ref{In6}) holds for all $u$, then $\cD$ is called the Riesz basis. 
\end{Definition}
In the case  $\delta \in (0,1)$ and $R^2_1=1-\de$, $R^2_2=1+\de$, we write $\mathcal{R}(u,\de)$ instead of $\mathcal{R}(u,R_1,R_2)$ and call a dictionary $\cD \in \mathcal{R}(u,\de)$ the Riesz dictionary with depth $u$ and parameter $\de$.
The term Riesz dictionary with depth $u$ and parameter $\delta \in (0,1)$ is another name for a dictionary satisfying the Restricted Isometry Property (RIP) with parameters $u$ and $\de$. 
 
  Let  $\Omega\subset \R^d$ be a nonempty set (usually a compact set)  equipped  with a  probability measure $\mu$.  Following traditions of mathematical analysis, we use the standard notation
$\mu$ for the measure on $\Omega$. The same notation $\mu$ (standard
in signal processing)
 is also used for the majorant of the coherency $M(D)$ (see above). However, we
are sure that this coincidence
will not cause any confusion for the reader.
For  $1\le p\leq  \infty$, we let  $L_p(\Omega):=L_p(\Omega,\mu)$  denote  the  Lebesgue  space $L_p$ defined with respect to the measure $\mu$ on $\Omega$ and let 
$$
\|f\|_p := \|f\|_{L_p(\Omega)} := \|f\|_{L_p(\Omega,\mu)} := \left(\int_\Og |f|^pd\mu\right)^{1/p}
$$  
be the  norm of $L_p(\Og)$ for $1\le p<\infty$. We will use a slight abuse of the notation that $L_\infty(\Og)$ denotes the space of all uniformly bounded measurable functions on $\Og$ with the norm
$\|f\|_\infty=\sup_{x\in \Og} |f(x)|$.  As usual $\delta_\bx$ denotes the Dirac measure supported at a point $\bx$.
 
 Throughout the paper we denote by  
$ \Phi_N:=\{\ff_j\}_{j=1}^N$  a dictionary of $N$  uniformly bounded functions on $\Og \subset \R^d$ satisfying 
\be \label{I9}\sup_{\bx\in\Og} |\vi_j(\bx)|\leq 1,\   \ 1\leq j\leq N.\ee

  The following Theorem \ref{IT3} is the main general theoretical result of this paper.  See Section \ref{O} for explanation of the notations and the proof. 
 
\begin{Theorem}\label{IT3}  Let a natural number $s\le N$ be given.   Assume
    that $\Phi_N$ belongs to $\cR(s,R_1,R_2)$ for some constants $0<R_1\leq
    R_2<\infty$ and is a uniformly bounded system
    satisfying  (\ref{I9}).  For given parameters $t\in (0,1]$ and $R_1$, $R_2$
    and some  $c:= c(t,R_2/R_1)$ the following holds.

Let $\xi^1,\ldots, \xi^m$ be i.i.d.
random points  with common  distribution  $\mu$ on $\Omega$. Then there exists an absolute constant $C_0$ such that with high probability for 
\begin{equation}
    \label{IT3_m}
m \ge  C_0  Ks \log N\cdot (\log(2Ks ))^2\cdot (\log (2Ks )+\log\log N),\quad K:= R_1^{-2}, 
\end{equation}
    we have the following properties: For any integer $v$ such that
    $(1+c)v\le s$ and
    any $f_0\in \cC(\Omega)$ the WOMP  algorithm  with weakness parameter $t$ applied to $f_0$ with respect to the   $\Phi_N(\Omega_m)$ in the space $L_2(\Omega_m,\mu_m)$ provides
\be\label{Inmp}
\|f_{c v}\|_{L_2(\Omega_m,\mu_m)} \le C\sigma_v(f_0,\Phi_N(\Omega_m))_{L_2(\Omega_m,\mu_m)}, 
\ee
and
\be\label{Inmp2}
\|f_{c v}\|_{L_2(\Omega,\mu)} \le C'\sigma_v(f_0,\Phi_N)_\infty 
\ee
with absolute constants $C$ and $C'$.
Moreover, we have
 \be\label{Inmp3}
  \|f_{cv} \|_{L_2(\Omega,\mu)} \le C'' \sigma_v(f_0,\Phi_N)_{L_2(\Og, \mu_\xi)}
 \ee
 with an absolute constant $C''$ and 
 $$
\mu_\xi := \frac{1}{2} \mu + \frac{1}{2m} \sum_{j=1}^m \delta_{\xi^j}.
$$
 \end{Theorem}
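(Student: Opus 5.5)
The plan is to combine two ingredients. The first is a deterministic Lebesgue-type inequality for WOMP with respect to dictionaries that are Riesz with depth $s$. The second is a probabilistic discretization result: with high probability the sampled dictionary $\Phi_N(\Omega_m)$ inherits the Riesz property from $\Phi_N$, and discrete $L_2$ norms are equivalent to continuous $L_2$ norms on all $s$-sparse subspaces.

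\textbf{Step 1 (random sampling preserves RIP).} For $\Phi_N\in\cR(s,R_1,R_2)$ with $|\ff_j|\le 1$, consider the union $\mathcal{X}_s$ of all subspaces spanned by $s$ elements of $\Phi_N$. For $g=\sum_{j\in\Lambda}a_j\ff_j$ with $|\Lambda|\le s$ we have
\[
\|g\|_\infty\le \|a\|_1\le s^{1/2}\|a\|_2\le s^{1/2}R_1^{-1}\|g\|_2 .
\]
So the Nikol'skii-type constant is $(Ks)^{1/2}$ with $K=R_1^{-2}$. The standard chaining argument for sampling discretization of unions of subspaces (Rudelson / Bourgain-type entropy bounds on $\mathcal{X}_s$, as in known universal discretization results) then gives the following. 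For $m$ as in (\ref{IT3_m}), with high probability
\[
\tfrac12\|g\|_{L_2(\Omega,\mu)}^2\le \|g\|_{L_2(\Omega_m,\mu_m)}^2\le \tfrac32\|g\|_{L_2(\Omega,\mu)}^2 \quad\text{for all } g\in\mathcal{X}_s .
\]
Consequently $\Phi_N(\Omega_m)\in\cR(s,R_1/\sqrt2,\sqrt{3/2}\,R_2)$, so the ratio $R_2/R_1$ changes only by an absolute factor. I expect this step to be the main obstacle: obtaining exactly the logarithmic factors in (\ref{IT3_m}) requires careful entropy estimates of the unit ball of $\mathcal{X}_s$ in the empirical sup-norm. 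I would try to cite an existing universal discretization theorem rather than redo the chaining.

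\textbf{Step 2 (Lebesgue inequality in the discrete space).} I will invoke the known Lebesgue-type inequality for WOMP (the Livshitz / Temlyakov result, proved for Riesz dictionaries of depth $s$). It states that for $t\in(0,1]$ there is $c=c(t,R_2/R_1)$ such that for any $f$ and any $v$ with $(1+c)v\le s$,
\[
\|f_{cv}\|\le C\sigma_v(f,\cD).
\]
Applying it to $f_0$ in $L_2(\Omega_m,\mu_m)$ with dictionary $\Phi_N(\Omega_m)$ gives (\ref{Inmp}).

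\textbf{Step 3 (transfer to continuous norms).} Let $G_{cv}$ be the WOMP approximant. Then $f_{cv}=f_0-G_{cv}$, where $G_{cv}$ is a combination of at most $cv$ elements. Let $h$ be a $v$-term approximant nearly attaining $\sigma_v(f_0,\Phi_N)_\infty$, or respectively $\sigma_v(f_0,\Phi_N)_{L_2(\mu_\xi)}$. Then $G_{cv}-h\in\mathcal{X}_s$ because $(1+c)v\le s$. This gives
\[
\|f_{cv}\|_{L_2(\mu)}\le \|f_0-h\|_{L_2(\mu)}+\sqrt2\,\|G_{cv}-h\|_{L_2(\mu_m)},
\]
and
\[
\|G_{cv}-h\|_{L_2(\mu_m)}\le \|f_{cv}\|_{L_2(\mu_m)}+\|f_0-h\|_{L_2(\mu_m)}\le (C+1)\|f_0-h\|_{L_2(\mu_m)},
\]
using (\ref{Inmp}) and $\sigma_v(f_0,\Phi_N(\Omega_m))\le\|f_0-h\|_{L_2(\mu_m)}$. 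Both $\|f_0-h\|_{L_2(\mu)}$ and $\|f_0-h\|_{L_2(\mu_m)}$ are at most $\|f_0-h\|_\infty$, which yields (\ref{Inmp2}). Alternatively, both are at most $\sqrt2\,\|f_0-h\|_{L_2(\mu_\xi)}$, since $\|\cdot\|^2_{L_2(\mu_\xi)}$ is the average of the two squared norms; this yields (\ref{Inmp3}). Continuity of $f_0$ guarantees that point evaluations make sense.
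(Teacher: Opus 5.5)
Your proposal is correct and follows the paper's proof essentially step for step. The paper obtains universal discretization of $\Sigma_s(\Phi_N)$ from the Dai--Temlyakov result (Theorem~\ref{IT1} with $p=2$, $\varepsilon=1/2$, $K=R_1^{-2}$), and transfers the Riesz/unconditionality property to $\Phi_N(\Omega_m)$ with the ratio $R_2/R_1$ worsened by $\sqrt3$. It then applies the Livshitz--Temlyakov inequality (Theorem~\ref{ssrT1}, reached via Proposition~\ref{InP1}, which phrases it through \textbf{UP}$(v,u)$ rather than directly for Riesz dictionaries) to get (\ref{Inmp}), and uses your triangle-inequality argument with a near-best $v$-term $h$ to get (\ref{Inmp2}) and (\ref{Inmp3}).
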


Let us discuss Theorem \ref{IT3}. First of all, in the Lebesgue-type inequalities we have in the left hand sides of 
(\ref{Inmp}) -- (\ref{Inmp3}) the error of approximation of $f_0$ with respect to the system $\Phi_N$ in different 
norms: $\|\cdot\|_{L_2(\Omega_m,\mu_m)}$ in (\ref{Inmp}) and $\|\cdot\|_{L_2(\Omega,\mu)}$ in (\ref{Inmp2}), (\ref{Inmp3}). Also,
in the right hand sides of 
(\ref{Inmp}) -- (\ref{Inmp3}) the best $v$-term approximations of $f_0$ with respect to the system $\Phi_N$ are measured in different norms.
This means that it corresponds to the sparsity assumption {\bf A1} and the noise assumptions (\ref{In1}).
We guarantee those inequalities under serious assumptions on the system $\Phi_N$. Sometimes, the assumption that $\Phi_N$ is a uniformly bounded system from $\cR(s,R_1,R_2)$ is easy to check. For many classical systems it is known. For instance, in the case
$\Phi_N$ is the trigonometric system $\{e^{ikx}\}_{k=1}^N$ defined on $\bbT=[0,2\pi]$ this assumption is satisfied with $s=N$, $R_1=R_2=1$ -- the system is orthonormal. 
Theorem \ref{IT3} is a development of results from \cite{DTM2}. The most important new feature of Theorem \ref{IT3} is 
the assumption  that $\Phi_N$ belongs to $\mathcal{R}(s,R_1,R_2)$ instead of the assumption that $\Phi_N$ is the Riesz basis of the span of $\Phi_N$. It allows us to treat redundant systems, which is important in applications. 

We remark that in fact $c:=c(t,R_2/R_1)= c_{\ref{ssrT1}}(t,3^{1/2}R_2/R_1)$, where $c_{\ref{ssrT1}}(t,U)$ is from Theorem \ref{ssrT1}.

In Section \ref{FD} we mainly focus on  dictionaries with high coherence (close to $1$).
We discuss the following special setting. Let $\cD$ be a dictionary of a Hilbert space $\mathcal{H}$. 
Assume that this dictionary has a coherence parameter $M(\cD) \le \mu  <1$. In addition, consider a subdictionary 
$\cD_1 \subset \cD$ with a much smaller coherence parameter $M(\cD_1) \le \mu_1  <\mu$. Then, assuming that 
$$
f= \sum_{n=1}^L c_ng_n,\quad g_n \in \cD_1
$$ 
and  imposing  technical restrictions on the coefficients $\{c_n\}$ and the number $K\in \bbN$ we prove that the Pure Greedy Algorithm (PGA) (see definition below) chooses the elements $g_1,g_2,\dots,g_K$ at the first $K$ iterations.

In Section \ref{FD} we also establish a general result  that allows us  to deal with the channel estimation problem for wireless communication.

Let $\mathcal D$ be a finite dictionary.
We assume that the coherence $M(\mathcal D)<1$, but it may be close to $1$. So
we need a modification of the abovementioned results.

Our main assumption is that all elements of a clean signal lie in a sub-dictionary
$\cD_1\subset\cD$ that has small coherence. 
Let us introduce the following quantity:
$$
M^+(\cD_1,\cD) := \max_{g\in\mathcal D} \left\{\sum_{h\in\mathcal
D_1}|\langle g,h\rangle| - \max_{h\in\mathcal D_1}|\langle g,h\rangle|\right\}.
$$

We prove that if the parameter $M^+(\cD_1,\cD)$ is small, then the OMP algorithm
 recovers all elements with large coefficients, as in Theorem~\ref{th_tropp} from Section \ref{FD}.

\begin{Theorem}
    \label{thm_mu_plus}
    Let $\mathcal D_1\subset\mathcal D$ satisfy the conditions
    $M(\mathcal D)\le \mu$, $M^+(\mathcal D_1,\mathcal D)\le
    \mu^+$ for some positive numbers $\mu$, $\mu^+$ such that
    \begin{equation}
        \label{mu_plus_condition}
    \mu + 2\mu^+ < 1.
    \end{equation}
    Suppose that we observe a signal
    $$
    y = \sum_{k=1}^K c_k g_k + w,\quad g_k\in\mathcal D_1.
    $$
    Given $\theta>0$, we denote $\hat\beta := 2\theta/(1-\mu-2\mu^+)$ and assume
    that the noise $w$ satisfies the condition
    $$
        \|w\|_\cD \le \theta.
    $$
   
    Then the OMP algorithm with the dictionary $\mathcal D$, the initial vector
    $y$ and the stopping criteria
    $$
    \|r_l\|_\cD \le \hat\theta :=
    \frac{\theta(1+\mu)}{1-\mu-2\mu^+}
    $$    
    has the following properties:
    \begin{itemize}  
        \item[(i)] it selects only ``true'' dictionary elements, i.e. from $\{g_k\}_{k=1}^K$;
        \item[(ii)] it selects all elements $g_k$ with the coefficients
            $|c_k|>\hat\beta$.
    \end{itemize}
\end{Theorem}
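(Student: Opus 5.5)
We argue by induction on the iterations of OMP, in the style of Tropp's analysis (Theorem~\ref{th_tropp}). The induction hypothesis is that after $l$ steps the algorithm has selected only true elements, say from $\Lambda_l\subset\{1,\dots,K\}$. Then the residual has the form
$$
r_l=\sum_{k=1}^K a_k g_k + w - P_l w,\qquad a_k=0 \text{ for } k\notin\Lambda_l \text{ replaced by } a_k=c_k,
$$
where $P_l$ is the orthogonal projection onto $\sp\{g_k\}_{k\in\Lambda_l}$. More precisely, write $r_l = s_l + w'$ with $s_l=\sum_{k=1}^K b_k g_k$ and $w'=w$. The projection of $y$ lies in the span of true elements, so $r_l = \sum_k b_k g_k + w$ with some coefficients $b_k$, and $b_k=c_k$ for $k\notin\Lambda_l$ up to the projection correction. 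The key point is that $r_l=\sum_k b_kg_k+w$ for some vector $b$, with $b_k = c_k - (\text{projection coefficient})$.

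Let $B:=\max_k|b_k|$, attained at some $k_0$. We need two estimates.

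\emph{Lower bound on a true correlation.} Using $M^+$ for $g=g_{k_0}$ (the max term over $\cD_1$ is $\langle g_{k_0},g_{k_0}\rangle=1$), we get
$$
|\langle r_l,g_{k_0}\rangle|\ge B - B\mu^+ - \theta .
$$

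\emph{Upper bound on a false correlation.} For $g\in\cD$ not among the $g_k$, the quantity $\sum_k |b_k||\langle g,g_k\rangle|$ is at most $B$ times (the largest term plus the rest). This gives at most $B(\mu+\mu^+)$, since the largest term is $\le\mu$ because $g\ne g_k$. Hence
$$
|\langle r_l,g\rangle|\le B(\mu+\mu^+)+\theta .
$$

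Thus a true element wins whenever $B(1-\mu-2\mu^+)>2\theta$, i.e. $B>\hat\beta$. This is statement (i) as long as the algorithm continues.

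\emph{Stopping.} If $B\le\hat\beta$, then $\|r_l\|_\cD\le B(1+\mu^+)+\theta$. We must compare this with $\hat\theta$. We have $\hat\beta(1+\mu^+)+\theta = \theta(2+2\mu^++1-\mu-2\mu^+)/(1-\mu-2\mu^+)=\theta(3-\mu)/(\cdot)$, which exceeds $\hat\theta$. So this crude comparison fails, and the stopping step is the main obstacle. The remedy is to use orthogonality: $r_l\perp g_k$ for $k\in\Lambda_l$, so the selected elements have zero correlation. The conversion from $B$ to the criterion should instead be done in the contrapositive direction. If the algorithm has not stopped, then $\|r_l\|_\cD>\hat\theta$, and we want $B>\hat\beta$. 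Bound $\|r_l\|_\cD\le B(1+\mu^+)+\theta$ for $g$ true, and $\le B(\mu+\mu^+)+\theta$ for $g$ false. Then $\|r_l\|_\cD>\hat\theta$ gives $B(1+\mu^+)>\theta(2\mu^++2\mu)/(1-\mu-2\mu^+)$, which is weaker than needed.

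Refine as follows. If the maximum of $|\langle r_l,g\rangle|$ is attained at a false $g$ with $B\le\hat\beta$, its value is at most $\hat\beta(\mu+\mu^+)+\theta = \theta(1+\mu)/(1-\mu-2\mu^+)=\hat\theta$, so the algorithm stops. Otherwise the maximum is attained at a true element and the choice is true anyway. This gives (i): whenever the algorithm makes a selection, either $B>\hat\beta$ (and a true element wins) or the argmax is true.

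For (ii), suppose the algorithm stops at step $l$ while some $|c_k|>\hat\beta$ with $k\notin\Lambda_l$. We must show $B>\hat\beta$. Since $r_l=y-P_ly$ and $P_l y$ has no $g_k$ component, we get $b_k=c_k$, so $B\ge|c_k|>\hat\beta$. Then the true correlation satisfies $|\langle r_l,g_{k_0}\rangle|\ge B(1-\mu^+)-\theta>\hat\beta(1-\mu^+)-\theta$, and a direct check shows this is $\ge\hat\theta$, because $2(1-\mu^+)-(1-\mu-2\mu^+)=1+\mu$. This contradicts stopping. Finally, linear independence of the true elements, which follows from $\mu+2\mu^+<1$ via a Gershgorin argument, guarantees that $b$ is well defined and that the process terminates after at most $K$ steps.
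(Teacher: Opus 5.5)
Your proof is correct and follows the paper's argument. Both use the representation $r_l=\sum_k b_kg_k+w$ with the original noise, a lower bound $B(1-\mu^+)-\theta$ at the true element carrying the largest coefficient, an upper bound $B(\mu+\mu^+)+\theta$ at false elements, the identity $\hat\beta(1-\mu^+)-\theta=\hat\beta(\mu+\mu^+)+\theta=\hat\theta$, and the observation that when $B\le\hat\beta$ a false maximizer already triggers the stopping rule.

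Your direct split on $B>\hat\beta$ versus $B\le\hat\beta$ is slightly cleaner than the paper's split on whether an untouched large coefficient remains. In a final write-up, drop the abandoned ``crude comparison'' detour and the garbled description of $r_l$ in your first paragraph; there $b_k=c_k$ holds exactly for unselected $k$, not ``up to the projection correction''.
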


 In Subsection \ref{sub_channel} we describe  the channel estimation problem from wireless communications  and show how to apply Theorem \ref{thm_mu_plus} in this situation. In fact, Theorem \ref{thm_mu_plus} is designed to deal with the oversampled Fourier matrix -- a typical example of a matrix with coherence close to $1.$     

 Motivated by applications in wireless communications  we study in Sections \ref{FD} and \ref{Disc} sparse approximation of elements, which have expansions with exponentially fast 
decaying coefficients. We prove  Theorem \ref{FDT1}  for a  general setting  (a  Banach space $X$  with a regular modulus of smoothness, a dictionary  $\cD$)  that for any element $f= \sum_{k=0}^\infty a_k g_k,\quad g_k\in \cD,\quad |a_k| \le q^k,\quad k=0,1,\dots,\quad q\in (0,1)$ we have $
\sigma_m(f,\cD) \le C q^m (1-q)^{-1/2}
$, and establish that it provides the best possible dependence on $q$ in the bound. 

\section{Definitions of greedy algorithms}
\label{algor}

We define the Pure Greedy Algorithm (PGA). We describe this algorithm for a general dictionary $\cD$. If $f\in \mathcal{H}$,
we let $g(f)\in \cD$ be an element from $\cD$ which maximizes $|\< f,g\>|$. We assume for simplicity that such a maximizer exists; if not suitable modifications are necessary (see Weak Greedy Algorithm in \cite{VTbook}) in the algorithm that follows. We define
$$
G(f,\cD):= \<f,g(f)\>g(f)\quad \text{and}\quad R(f,\cD) := f-G(f,\cD).
$$
 
 {\bf Pure Greedy Algorithm (PGA).} We define $f_0:= f$ and $G_0(f,\cD) := 0$. Then, for each $m\ge 1$, we inductively define
$$
G_m(f,\cD):= G_{m-1}(f,\cD) +G(f_{m-1},\cD)
$$
$$
f_m:= f-G_m(f,\cD) = R(f_{m-1},\cD).
$$
Note that for a given element $f$ the sequence $\{G_m(f,\cD)\}$ may not be unique. 

This algorithm is well studied from the point of view of convergence and rate of convergence. The reader 
can find the corresponding results and historical comments in \cite{VTbook}, Ch.2.

  Let a sequence $\tau = \{t_k\}_{k=1}^\infty$,
$0\le t_k \le 1$, be given. The following greedy algorithm was
defined in \cite{T1} under the name Weak Orthogonal Greedy Algorithm (WOGA).

 {\bf Weak Orthogonal Matching Pursuit (WOMP).} Let $f_0$ be given. Then for each $m\ge 1$ we inductively define:

(1) $\psi_m  \in \cD$ is any element satisfying
$$
|\langle f_{m-1},\psi_m\rangle | \ge t_m
\sup_{g\in \cD} |\langle f_{m-1},g\rangle |.
$$

(2) Let $\mathcal{H}_m := \sp (\psi_1,\dots,\psi_m)$ and let
$P_{\mathcal{H}_m}(\cdot)$ denote an operator of orthogonal projection onto $\mathcal{H}_m$.
Define
$$
G_m(f_0,\cD) := P_{\mathcal{H}_m}(f_0).
$$

(3) Define the residual after $m$th iteration of the algorithm
$$
f_m := f_0-G_m(f_0,\cD).
$$

In the case $t_k=1$, $k=1,2,\dots$,   WOMP is called the Orthogonal
Matching Pursuit (OMP). In this paper we only consider the case $t_k=t$, $k=1,2,\dots$, $t\in(0,1]$.  The term {\it weak} in the definition of the WOMP means that at step (1) we do not shoot for the optimal element of the dictionary which realizes the corresponding supremum but we are satisfied with a weaker property than being optimal. The obvious reason for this is that we do not know in general that the optimal element exists. Another practical reason is that the weaker the assumption the easier it is to satisfy it and, therefore, easier to realize in practice. Clearly, $\psi_m$ may not be unique. Our results apply for any realization (any choice of $\psi_m$) of the WOMP.

  \section{Performance of the WOMP for  Riesz dictionaries}
 \label{O}
 
 
 Let us begin our discussion with the case, when we assume that our clean signal $f^*$ is sparse: $f^* \in \Sigma_m(\cD)$ with respect to a given dictionary $\cD$. We recall that 
$
\sigma_m(f,\cD) := \inf_{g\in \Sigma_m(\cD)} \|f-g\|.
$
Assume that we have the following inequality, which is called the Lebesgue-type inequality, for the WOGA($t$)
\be\label{Leb1}
\|f_m\| \le C(t)\sigma_m(f,\cD). 
\ee
Then (\ref{Leb1}) guarantees that under assumption (\ref{In1}) after $m$ iterations of the  WOGA($t$) applied to the noisy version $f$ of the clean signal 
$f^*$ we obtain
$$
\|f^* - G_m(f,\cD)\| \le \|f-f^*\| + \|f_m\| \le (1+C(t))\epsilon.
$$
This indicates a direct connection between approximation of noisy data and the Lebesgue-type inequalities. We now cite some known results on the Lebesgue-type inequalities. 

It was pointed out in \cite{LivTem}  that results on the Lebesgue-type inequalities proved in the Banach spaces case provide a corollary for Hilbert spaces that gives sufficient conditions somewhat weaker than the known RIP conditions on $\cD$ for the Lebesgue-type inequality to hold.

The following concept is useful for general  Banach spaces. 
 
  {\bf UP($v,D$). ($v,D$)-unconditional property.}  We say that a dictionary $\cD =\{g_i\}_{i=1}^\infty$ is ($v,D$)-unconditional with a constant $U$ if for any $v$-sparse
   element $f=\sum_{i\in A}x_ig_i$ ($|A|\le v$)  and any $\Lambda$, such that $A\cap \Lambda =\emptyset$ and $|A|+|\Lambda| \le D$, we have for any $\{c_i\}$
\be\label{UP}
\left\|\sum_{i\in A}x_ig_i-\sum_{i\in\Lambda}c_ig_i\right\|_X\ge U^{-1}\left\|\sum_{i\in A}x_ig_i\right\|_X.
\ee
 
 We now formulate a result from \cite{LivTem} (see also \cite{VTbookMA}, Section 8.7). Theorem \ref{ssrT1} is a development of the breakthrough result by Zhang (see \cite{Z}). 
 
  \begin{Theorem}[{\cite[Corollary I.1]{LivTem}}]\label{ssrT1} Let $\mathcal{H}$ be a Hilbert space. Suppose that the dictionary $\cD$ has property  {\bf UP($v,D$)} with a constant $U$.   Then there exists a constant $c_{\ref{ssrT1}}(t,U)$ such that for any $f_0\in \mathcal{H}$ the WOMP with weakness parameter $t$ applied to $f_0$ provides
$$
\|f_{c_{\ref{ssrT1}}(t,U) v}\| \le C\sigma_v(f_0,\cD)\quad\text{for}\quad v+c_{\ref{ssrT1}}(t,U) v\le D
$$
with an absolute constant $C$.
\end{Theorem}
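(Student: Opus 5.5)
The plan follows Zhang's argument, in the Hilbert-space form of \cite{LivTem}. Fix $f_0$ and a near-best $v$-term approximant $f^*=\sum_{i\in A}x_ig_i$ with $|A|\le v$ and $\|f_0-f^*\|\le 2\sigma_v(f_0,\cD)=:2\sigma$ (if the infimum is attained we take it). Let $\Gamma_k$ be the set of indices selected by the WOMP after $k$ steps, and let $P_k^\perp$ be the projection onto the orthogonal complement of $\mathcal{H}_k$. Then
$$
f_k=P_k^\perp f_0=P_k^\perp\Big(\sum_{i\in A\setminus\Gamma_k}x_ig_i\Big)+P_k^\perp(f_0-f^*),
$$
because the coordinates in $A\cap\Gamma_k$ are killed by the projection. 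Two facts drive everything.

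First, the one-step energy decrease of the WOMP: $\|f_k\|^2-\|f_{k+1}\|^2\ge t^2\sup_{g\in\cD}|\langle f_k,g\rangle|^2$. Second, a lower bound from UP($v,D$). For any $S=A\setminus\Gamma_k$ with $|A|+|\Gamma_k|\le D$, apply (\ref{UP}) with $\Lambda=\Gamma_k\setminus A$. This gives $\|P_k^\perp\sum_{i\in S}x_ig_i\|\ge U^{-1}\|\sum_{i\in S}x_ig_i\|$. Combined with $\|f_k\|^2=\langle f_k,\sum_{S}x_ig_i\rangle+\langle f_k,f_0-f^*\rangle$, this yields a lower bound for the correlation with some element of $S$ as long as $\|f_k\|$ is much larger than $\sigma$. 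The restriction $v+cv\le D$ is exactly what keeps every set $A\cup\Gamma_k$ inside the range where UP applies throughout the $cv$ iterations.

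The core is Zhang's layering induction. Order the uncaught coefficients $\{x_i\}_{i\in A\setminus\Gamma_k}$ by decreasing size and split them into dyadic blocks $B_1,B_2,\dots$ of sizes $1,2,4,\dots$. The claim to prove is: if after $k$ steps the uncaught part has $r$ elements, then after at most $k+c_1 r$ further steps (with $c_1=c_1(t,U)$) either $\|f\|\le C\sigma$ already holds, or the number of uncaught elements has dropped to at most $r/2$. To prove it, one picks the first block $B_\ell$ whose tail energy $\|\sum_{j\ge\ell}\sum_{B_j}x_ig_i\|^2$ is comparable to the full uncaught energy. Running $\sim 2^\ell$ greedy steps, each removing at least $t^2$ times the squared maximal correlation, drives $\|f\|^2$ below the energy of the blocks preceding $B_\ell$. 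By UP, any configuration in which those leading coefficients are still uncaught would force $\|f\|$ to be at least $U^{-1}$ times that energy, a contradiction. Summing the geometric series of step counts over the halvings gives a total of at most $c(t,U)v$ steps before $\|f_{cv}\|\le C\sigma_v(f_0,\cD)$. The monotonicity $\|f_{k+1}\|\le\|f_k\|$ then gives the stated inequality at exactly $cv$.

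The main obstacle is the correlation lower bound. Its natural form is $\sup_{i\in S}|\langle f_k,g_i\rangle|\ge(\|f_k\|^2-\|f_k\|\,2\sigma)/\|x_S\|_1$, which needs an $\ell_1$–$\ell_2$ control such as $\|x_S\|_1\le |S|^{1/2}\|x_S\|_2\lesssim |S|^{1/2}\|\sum_S x_ig_i\|$. UP alone gives a lower bound on the norm of a projected sum, not directly on its coefficient vector. My first attempt would be to extract coefficientwise control from UP by taking $\Lambda$ to be all of $S$ except one index, which gives $\mathrm{dist}(x_ig_i,\sp\{g_j\}_{j\ne i})\ge U^{-1}\|x_i g_i\|$. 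The aim is to work with the dual functionals rather than with $\|x_S\|_1$, mimicking the Banach-space proof in \cite{LivTem}, where the Hilbert case arises as the corollary with modulus of smoothness of power type $2$. If that falls short, I would fall back on invoking the general Banach-space Lebesgue-type theorem of \cite{LivTem} and checking its hypotheses for $\mathcal{H}$, which is how Corollary I.1 is obtained there.
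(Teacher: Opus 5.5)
The paper gives no proof of Theorem~\ref{ssrT1}. It imports the result from \cite[Corollary I.1]{LivTem}, so your fallback of citing \cite{LivTem} is exactly what the paper does. Your self-contained Zhang-type route, however, has a genuine gap at the step you flagged, and the remedy you propose does not close it.

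Applying UP with $A=\{i\}$, $\Lambda=S\setminus\{i\}$ gives only $|x_i|\,\|g_i\|\le U\|\sum_{j\in S}x_jg_j\|$. This is $\ell_\infty$ control of the coefficients; equivalently, the biorthogonal functionals have norm at most $U$, which is the same information. Hence at best $\|x_S\|_1\le U|S|\,\|\sum_{j\in S}x_jg_j\|$, which loses a factor $|S|^{1/2}$ against what you need. The consequences are:
\begin{itemize}
\item a greedy step is only guaranteed to remove energy of order $t^2\|f_k\|^2/(U^4|S|^2)$;
\item a phase handling $\sim 2^\ell$ uncaught elements then costs of order $4^\ell$ steps;
\item the total count becomes $O(v^2)$ instead of $c(t,U)v$.
\end{itemize}
There is also a minor slip. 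To bound $\|P_k^\perp\sum_{i\in S}x_ig_i\|$ from below you must apply UP with $\Lambda=\Gamma_k$, which is disjoint from $S$. The choice $\Lambda=\Gamma_k\setminus A$ ignores the selected elements $g_i$, $i\in A\cap\Gamma_k$, which also lie in $\mathcal{H}_k$.

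The missing ingredient is a short Hilbert-space argument. Inside any set $S$ with $|S|\le v$, UP says that $\{g_i\}_{i\in S}$ is suppression-unconditional with constant $U$. So for all signs $\epsilon_i=\pm1$,
$$
\Big\|\sum_S\epsilon_ix_ig_i\Big\|=\Big\|2\sum_{\epsilon_i=1}x_ig_i-\sum_Sx_ig_i\Big\|\le(2U+1)\Big\|\sum_Sx_ig_i\Big\|,
$$
and the same holds with the roles of $x_i$ and $\epsilon_ix_i$ exchanged. Average the squares over independent random signs and use $\mathbb{E}\|\sum_S\epsilon_ix_ig_i\|^2=\sum_S|x_i|^2\|g_i\|^2$. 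For unit-norm elements this gives
\[
\frac{\|x_S\|_2}{2U+1}\le\Big\|\sum_{i\in S}x_ig_i\Big\|\le(2U+1)\|x_S\|_2,
\qquad
\Big\|P_k^\perp\sum_{i\in S}x_ig_i\Big\|\ge\frac{\|x_S\|_2}{U(2U+1)}.
\]
These are exactly the RIP-type inputs of Zhang's layering. The lower bound plus Cauchy--Schwarz gives the correlation estimate $\max_{i\in S}|\langle f_k,g_i\rangle|\gtrsim U^{-2}|S|^{-1/2}\|f_k\|$ as long as $\|f_k\|\gg\sigma$. The upper bound lets you compare block energies. With these two facts your outline goes through with $c=c(t,U)$.

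Note that this uses $\|g\|=1$, the convention in \cite{LivTem}. UP is invariant under rescaling individual elements, and with only $\|g\|\le 1$ the statement is false. Take the orthogonal system $\{\delta e_1,e_2,\dots,e_n\}$, which satisfies UP with $U=1$ for every $v,D$, and $f_0=e_1+n^{-1}\sum_{j=2}^n e_j$ with $\delta<1/n$. OMP selects $e_2,\dots,e_n$ first, so $\|f_k\|\ge 1$ for $k<n$, whereas $\sigma_1(f_0,\cD)<n^{-1/2}$. The paper does normalize $\Phi_N(\Omega_m)$ before invoking the theorem in the proof of Theorem~\ref{IT2}.
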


\subsection{Sampling recovery. Proof of Theorem \ref{IT3}}
\label{ssr}

In this section we focus on special collections of subspaces generated by a given dictionary. 
Let   $\D_N=\{g_i\}_{i=1}^N$ be a given dictionary. 
Given an integer $1\leq v\leq N$, we denote by $\mathcal{X}_v(\D_N)$ the collection of all linear spaces spanned by $g_j$, $j\in J$  with $J\subset [1,N]\cap \N$ and $|J|=v$, and 
denote by $\Sigma_v(\D_N)$  the set of all $v$-term approximants with respect to $\D_N$:
\begin{align*}
\Sigma_v(\D_N):&= \bigcup_{V\in\cX_v(\D_N)} V.
\end{align*}
Denote
$$
\Sigma_v^q(\D_N):= \{f\in \Sigma_v(\D_N)\,:\, \|f\|_q\le 1\}.
$$

   \begin{Definition}\label{ID1} We say that a set $\xi:= \{\xi^j\}_{j=1}^m \subset \Omega $ provides {\it   universal discretization}   for the collection $\cX:= \{X(n)\}_{n=1}^k$ of finite-dimensional  linear subspaces $X(n)$ if we have
 \be\label{I3}
\frac{1}{2}\|f\|_2^2 \le \frac{1}{m} \sum_{j=1}^m |f(\xi^j)|^2\le \frac{3}{2}\|f\|_2^2\quad \text{for any}\quad f\in \bigcup_{n=1}^k X(n) .
\ee
We denote by $m(\cX)$ the minimal $m$ such that there exists a set $\xi$ of $m$ points, which
provides  universal discretization (\ref{I3}) for the collection $\cX$. 
\end{Definition}

We now formulate Theorem \ref{IT2}, which provides three Lebesgue-type inequalities and after that explain some notations 
used in its formulation. We are interested in the error in the $L_2(\Og,\mu)$. Along with the space $L_2(\Omega,\mu)$ we consider the space $L_2(\Omega_m,\mu_m)$
where $\Omega_m=\{\xi^\nu\}_{\nu=1}^m$   and  $\mu_m(\xi^\nu) =1/m$, $\nu=1,\dots,m$. Let $\Phi_N(\Omega_m)$ be the restriction 
of $\Phi_N$ onto $\Omega_m$.

\begin{Theorem}\label{IT2}  Let a natural number $s\le N$ be given.   Assume that $\Phi_N$ belongs to $\mathcal{R}(s,R_1,R_2)$ for some constants $0<R_1\leq R_2<\infty$. Also assume that $\Phi_N$ is a uniformly bounded system satisfying  (\ref{I9}).   For given parameters $t\in (0,1]$ and $R_1$, $R_2$ from above there exists a constant $c=C(t,R_2/R_1)$ with the following property. Let for an integer $v$ such that $u:=(1+c) v\leq s$ and a number $m\ge m(\cX_u(\Phi_N))$ the set $\Og_m$ of $m$ points
$\xi^1,\cdots, \xi^m \in  \Omega$ provide   universal discretization for the collection 
$\cX_u(\Phi_N)$.  Then for any $f_0\in \cC(\Omega)$ the WOMP with weakness parameter $t$ applied to $f_0$ with respect to the   $\Phi_N(\Omega_m)$ in the space $L_2(\Omega_m,\mu_m)$ provides
\be\label{mp}
\|f_{c v}\|_{L_2(\Omega_m,\mu_m)} \le C\sigma_v(f_0,\Phi_N(\Omega_m))_{L_2(\Omega_m,\mu_m)}, 
\ee
and
\be\label{mp2}
\|f_{c v}\|_{L_2(\Omega,\mu)} \le C'\sigma_v(f_0,\Phi_N)_\infty 
\ee
with absolute constants $C$ and $C'$.
Moreover, we have
 \be\label{mp3}
  \|f_{cv} \|_{L_2(\Omega,\mu)} \le C'' \sigma_v(f_0,\Phi_N)_{L_2(\Og, \mu_\xi)}
 \ee
 with an absolute constant $C''$ and 
 $$
\mu_\xi := \frac{1}{2} \mu + \frac{1}{2m} \sum_{j=1}^m \delta_{\xi^j}.
$$
 \end{Theorem}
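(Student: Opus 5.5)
The plan is to reduce Theorem \ref{IT2} to Theorem \ref{ssrT1}, applied in the Hilbert space $\mathcal{H}=L_2(\Omega_m,\mu_m)$ with the dictionary $\Phi_N(\Omega_m)$. For this I need property UP($v,D$) with $D=u=(1+c)v$ and a constant $U$ depending only on $R_2/R_1$. Then (\ref{mp}) is exactly the conclusion of Theorem \ref{ssrT1}. The remaining inequalities (\ref{mp2}) and (\ref{mp3}) will follow by transferring the discrete bounds back to $L_2(\Omega,\mu)$ through universal discretization.

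\emph{Step 1: the unconditional property.} Take $A\cap\Lambda=\emptyset$ with $|A|\le v$ and $|A|+|\Lambda|\le u\le s$. The function $g:=\sum_{i\in A}x_i\ff_i-\sum_{i\in\Lambda}c_i\ff_i$ lies in some $V\in\cX_u(\Phi_N)$, since we may pad the index set to size $u\le N$. By (\ref{I3}) and the Riesz property (\ref{In6}) in $L_2(\Omega,\mu)$,
\[
\|g\|_{L_2(\Omega_m,\mu_m)}^2\ge \tfrac12\|g\|_2^2\ge \tfrac12 R_1^2\Big(\sum_{A}|x_i|^2+\sum_\Lambda|c_i|^2\Big)\ge \tfrac12 R_1^2\|x\|_2^2 .
\]
On the other side,
\[
\Big\|\sum_A x_i\ff_i\Big\|_{L_2(\Omega_m,\mu_m)}^2\le \tfrac32 R_2^2\|x\|_2^2 .
\]
Hence UP holds with $U=3^{1/2}R_2/R_1$, which matches the remark that $c=c_{\ref{ssrT1}}(t,3^{1/2}R_2/R_1)$. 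The elements of $\Phi_N(\Omega_m)$ have norm at most $1$ by (\ref{I9}), so the dictionary normalization is satisfied. Theorem \ref{ssrT1} then gives (\ref{mp}).

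\emph{Step 2: transfer to $L_2(\Omega,\mu)$.} Write $f_{cv}=f_0-G$ with $G\in\Sigma_{cv}(\Phi_N)$, and let $h\in\Sigma_v(\Phi_N)$ be a near-best approximant in the relevant norm. Then $G-h\in\Sigma_u(\Phi_N)$. Using the triangle inequality and the left half of (\ref{I3}),
\[
\|f_{cv}\|_{L_2(\Omega,\mu)}\le \|f_0-h\|_{L_2(\Omega,\mu)}+\sqrt2\,\|G-h\|_{L_2(\Omega_m,\mu_m)} .
\]
The last term is bounded by $\sqrt2\big(\|f_{cv}\|_{L_2(\Omega_m,\mu_m)}+\|f_0-h\|_{L_2(\Omega_m,\mu_m)}\big)$. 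Applying (\ref{mp}), and noting that $\sigma_v(f_0,\Phi_N(\Omega_m))_{L_2(\Omega_m,\mu_m)}\le\|f_0-h\|_{L_2(\Omega_m,\mu_m)}$, gives
\[
\|f_{cv}\|_{L_2(\Omega,\mu)}\le \|f_0-h\|_{L_2(\Omega,\mu)}+C_1\|f_0-h\|_{L_2(\Omega_m,\mu_m)} .
\]
Both norms on the right are at most $\|f_0-h\|_\infty$; since $f_0$ is continuous, sup-norm values at the points are meaningful. This yields (\ref{mp2}). Moreover, $\|f_0-h\|_{L_2(\Omega,\mu)}^2+\|f_0-h\|_{L_2(\Omega_m,\mu_m)}^2=2\|f_0-h\|_{L_2(\Omega,\mu_\xi)}^2$, so the right-hand side is at most $C''\|f_0-h\|_{L_2(\Omega,\mu_\xi)}$. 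Taking the infimum over $h$ gives (\ref{mp3}).

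The main obstacle is Step 1: it must be arranged so that $|A|+|\Lambda|\le u$ keeps every relevant function inside a single subspace of $\cX_u(\Phi_N)$ where both the discretization and the Riesz bounds apply. This is exactly why the RIP depth $s$ must dominate $u=(1+c)v$, with $c$ fixed through Theorem \ref{ssrT1} before $v$ is chosen. The near-minimizer technicality is handled by taking $h$ within $\epsilon$ of the infimum and letting $\epsilon\to0$.
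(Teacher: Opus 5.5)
Your proposal is correct and is essentially the paper's own proof. The ingredients match: the UP constant $U=3^{1/2}R_2/R_1$ from the lower Riesz bound and the two sides of (\ref{I3}), Theorem \ref{ssrT1} for (\ref{mp}), and the transfer to $L_2(\Omega,\mu)$ via the left inequality in (\ref{I3}) applied to $G-h\in\Sigma_u(\Phi_N)$. Your single bound $\|f_{cv}\|_{L_2(\Omega,\mu)}\le\|f_0-h\|_{L_2(\Omega,\mu)}+C_1\|f_0-h\|_{L_2(\Omega_m,\mu_m)}$ just packages the paper's two parallel computations for (\ref{mp2}) and (\ref{mp3}).

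One justification should be tightened. The bound $\|g\|\le 1$ is not what makes Theorem \ref{ssrT1} applicable. UP is invariant under rescaling individual atoms, but WOMP is not: shrinking a few atoms of an orthonormal basis keeps $U=1$ yet lets OMP skip large coefficients on them. This is why the paper passes to the normalized system. Here the issue is harmless, because Riesz plus (\ref{I3}) give $R_1/\sqrt{2}\le\|\ff_j\|_{L_2(\Omega_m,\mu_m)}\le\sqrt{3/2}\,R_2$. Hence WOMP with weakness $t$ on $\Phi_N(\Omega_m)$ is a realization of WOMP with weakness $tR_1/(\sqrt{3}\,R_2)$ on the normalized system, and $c$ still depends only on $t$ and $R_2/R_1$.
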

 
  Note, that the most difficult assumption in Theorem \ref{IT2} is the assumption that the set $\Og_m$ of $m$ points $\xi^1,\cdots, \xi^m \in  \Omega$ provides   universal discretization for the collection $\cX_u(\Phi_N)$. 
There are two difficult question in that regard. The first question is the existence of such a set with good $m$. In other words,
it is a question of finding a good upper bound for the $ m(\cX_u(\Phi_N))$. The second question is how to construct such a good set. Both of these questions were intensely studied for the last decade. The reader can find some material in the surveys \cite{DPTT}, \cite{DKT}, \cite{KKLT}, \cite{LMT}. It turns out that the probabilistic approach gives rather 
accurate results.  

 \begin{proof}[Proof of Theorem \ref{IT2}]  We begin with a simple Proposition \ref{InP1}.

\begin{Proposition}\label{InP1}
  Any $\cD \in \mathcal{R}(u,R_1,R_2)$ satisfies UP($v,u$)  with a constant $U=R_2/R_1$ for any $v\le u$. 
\end{Proposition}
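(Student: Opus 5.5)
The plan is to check the defining inequality (\ref{UP}) directly from the two Riesz bounds (\ref{In6}) in Definition \ref{InD1}. Fix $v\le u$, a set $A$ with $|A|\le v$, and a set $\Lambda$ with $A\cap\Lambda=\emptyset$ and $|A|+|\Lambda|\le u$. Take arbitrary coefficients $x_i$, $i\in A$, and $c_i$, $i\in\Lambda$. Form the combined coefficient vector $b$ indexed by $A\cup\Lambda$ by setting $b_i=x_i$ for $i\in A$ and $b_i=-c_i$ for $i\in\Lambda$. Then
$$
\sum_{i\in A}x_ig_i-\sum_{i\in\Lambda}c_ig_i=\sum_{i\in A\cup\Lambda}b_ig_i .
$$
This is a combination of at most $u$ distinct dictionary elements.

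The one technical point is that (\ref{In6}) is stated for exactly $u$ distinct elements, while $|A\cup\Lambda|$ may be smaller than $u$. I handle this by padding: add $u-|A\cup\Lambda|$ further distinct elements of $\cD$ with zero coefficients. This is possible as long as $\cD$ has at least $u$ elements, which is implicit in the definition of depth $u$. So the lower Riesz bound applies to every combination of at most $u$ elements, and the same holds for the upper bound.

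With that in hand, the lower bound gives
$$
\Big\|\sum_{i\in A\cup\Lambda}b_ig_i\Big\|\ge R_1\|b\|_2\ge R_1\Big(\sum_{i\in A}|x_i|^2\Big)^{1/2},
$$
because dropping the $\Lambda$-coordinates can only decrease the $\ell_2$ norm. The upper bound applied to the $A$-part alone (at most $v\le u$ elements) gives
$$
\Big\|\sum_{i\in A}x_ig_i\Big\|\le R_2\Big(\sum_{i\in A}|x_i|^2\Big)^{1/2}.
$$
Combining the two yields
$$
\Big\|\sum_{i\in A}x_ig_i-\sum_{i\in\Lambda}c_ig_i\Big\|\ge \frac{R_1}{R_2}\Big\|\sum_{i\in A}x_ig_i\Big\|,
$$
which is exactly (\ref{UP}) with $U=R_2/R_1$.

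I do not expect a real obstacle here. The only care needed is the padding remark that extends (\ref{In6}) from exactly $u$ elements to at most $u$ elements. The same argument works verbatim for complex coefficients.
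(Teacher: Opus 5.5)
Your proof is correct and follows essentially the paper's own argument: apply the lower Riesz bound to the combined combination over $A\cup\Lambda$, drop the $\Lambda$-coordinates from the $\ell_2$ norm, and use the upper Riesz bound on the $A$-part to get $U=R_2/R_1$. Your padding remark, which extends (\ref{In6}) from exactly $u$ to at most $u$ elements, is a point the paper leaves implicit.
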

\begin{proof} Indeed, we have 
$$
\left\|\sum_{i\in A}x_ig_i-\sum_{i\in\Lambda}c_ig_i\right\|_\mathcal{H}\ge R_1 \left(\sum_{i\in A}|x_i|^2 + \sum_{i\in\Lambda}|c_i|^2\right)^{1/2} 
$$
$$
\ge R_1 \left(\sum_{i\in A}|x_i|^2  \right)^{1/2}  \ge (R_1/R_2)\left\|\sum_{i\in A}x_ig_i \right\|_\mathcal{H}.
$$
\end{proof}

Proposition \ref{InP1} implies that the system $\Phi_N$ has the {\bf UP}$(v,u)$ in the $L_2(\Omega,\mu)$ with any parameters    $v\le u$ and the constant $U_1 = R_2/R_1$. Our assumption that the set $\Og_m$ of $m$ points
$\xi^1,\cdots, \xi^m \in  \Omega$ provides   universal discretization for the collection 
$\cX_u(\Phi_N)$ (see inequalities 
(\ref{I3})) impies that the discretized system $\Phi_N(\Omega_m)$ has the {\bf UP($v,u$)} in the $L_2(\Omega_m,\mu_m)$ with the constant $U_2 =U_1 3^{1/2}$. 

We now define a dictionary $\D_N$ in the $L_2(\Omega_m,\mu_m)$ as the normalized system $\Phi_N(\Omega_m)$. Clearly, property {\bf UP} does not depend on the normalization. 
We plan to apply the WOMP algorithm with the weakness parameter $t$. We now set $c=c_{3.1}(t,U_2)$ and for a given $v$  set $u=v+c_{3.1}(t,U_2)v$ with the constant $c_{3.1}(t,U_2)$ from Theorem \ref{ssrT1}. 
It remains to apply Theorem \ref{ssrT1}. This proves (\ref{mp}) in Theorem \ref{IT2}. 

We now derive (\ref{mp2}) from (\ref{mp}).   Clearly, 
$$
\sigma_v(f_0,\Phi_N(\Omega_m))_{L_2(\Omega_m,\mu_m)} \le \sigma_v(f_0,\Phi_N )_\infty.
$$
Let $f\in \Sigma_v(\Phi_N)$ be such that  $\|f_0-f\|_\infty \le 2 \sigma_v(f_0,\Phi_N)_\infty$. Then (\ref{mp}) implies 
$$
\|f - G_{cv}(f_0,\Phi_N(\Omega_m))\|_{L_2(\Omega_m,\mu_m)} \le \|f-f_0\|_{L_2(\Omega_m,\mu_m)} +\|f_{cv}\|_{L_2(\Omega_m,\mu_m)} 
$$
$$
\le (2+C)\sigma_v(f_0,\Phi_N)_\infty.
$$
Using that $f - G_{cv}(f_0,\Phi_N(\Omega_m)) \in \Sigma_u(\Phi_N)$, by discretization (\ref{I3}) we 
conclude that
\be\label{ssr3}
\|f - G_{cv}(f_0,\Phi_N(\Omega_m))\|_{L_2(\Omega,\mu)} \le 2^{1/2}(2+C)\sigma_v(f_0,\Phi_N)_\infty.
\ee
Finally,
$$
\|f_{cv}\|_{L_2(\Omega,\mu)} \le \|f-f_0\|_{L_2(\Omega,\mu)} + \|f - G_{cv}(f_0,\Phi_N(\Omega_m))\|_{L_2(\Omega,\mu)}.
$$
This and (\ref{ssr3}) prove (\ref{mp2}).

We now prove (\ref{mp3}).  It is clear that the space $L_2(\Omega_m,\mu_m)$ can be seen as the $L_2(\Omega,\mu_m)$ with 
 $$
 \mu_m = \frac1{m}\sum_{j=1}^m \delta_{\xi^j}.
 $$
 Thus, we have $\mu_\xi = (\mu+\mu_m)/2$.
 We  use discretization  assumption (\ref{I3}) (we only need the left inequality in (\ref{I3})) and the inequalities
$$
\|h\|_{L_2(\Og, \mu)}	\le 2^{1/2} \|h\|_{L_2(\Og,\mu_{\xi})},
$$
$$
\|h\|_{L_2(\Omega_m,\mu_m)}=\|h\|_{L_2(\Omega,\mu_m)} \le 2^{1/2}\|h\|_{L_2(\Omega,\mu_\xi)}.
$$
Then
$$
\sigma_v(f_0,\Phi_N(\Omega_m))_{L_2(\Omega_m,\mu_m)} \le 2^{1/2}\sigma_v(f_0,\Phi_N )_{L_2(\Omega,\mu_\xi)}.
$$
Let $f\in \Sigma_v(\Phi_N)$ be such that  $\|f_0-f\|_{L_2(\Omega,\mu_\xi)} \le 2 \sigma_v(f_0,\Phi_N)_{L_2(\Omega,\mu_\xi)}$. Then (\ref{mp}) implies 
$$
\|f - G_{cv}(f_0,\Phi_N(\Omega_m))\|_{L_2(\Omega_m,\mu_m)} \le \|f-f_0\|_{L_2(\Omega_m,\mu_m)} +\|f_{cv}\|_{L_2(\Omega_m,\mu_m)} 
$$
$$
\le 2^{1/2}(2+C)\sigma_v(f_0,\Phi_N)_{L_2(\Omega,\mu_\xi)}.
$$
Using that $f - G_{cv}(f_0,\Phi_N(\Omega_m)) \in \Sigma_u(\Phi_N)$, by discretization (\ref{I3}) we 
conclude that
\be\label{ssr3'}
\|f - G_{cv}(f_0,\Phi_N(\Omega_m))\|_{L_2(\Omega,\mu)} \le 2 (2+C)\sigma_v(f_0,\Phi_N)_{L_2(\Omega,\mu_\xi)}.
\ee
Finally,
$$
\|f_{cv}\|_{L_2(\Omega,\mu)} \le \|f-f_0\|_{L_2(\Omega,\mu)} + \|f - G_{cv}(f_0,\Phi_N(\Omega_m))\|_{L_2(\Omega,\mu)}.
$$
This and (\ref{ssr3'}) prove (\ref{mp3}).

\end{proof}

 Theorem \ref{IT2} has a very important assumption that the set $\Og_m$ of $m$ points
$\xi^1,\cdots, \xi^m \in  \Omega$ provides   universal discretization for the collection 
$\cX_u(\Phi_N)$. We now discuss a way of satisfying this assumption. The following Theorem \ref{IT1} was proved in \cite{DTM2} (see also \cite{DKT}). 

\begin{Theorem}[\cite{DKT}, Theorem 5.2]\label{IT1}  Let $ \cD_N=\{\ff_j\}_{j=1}^N$ be a set of $N$ uniformly bounded functions on $\Omega$
satisfying
$$
\max_{1\leq j\leq N} \|\ff_j\|_{L_\infty(\Omega)}\le 1.
$$
Let $1\leq s\leq N$ be a given integer. Assume that there exists a constant $K\ge 1$ such that
\begin{equation}\label{Bessel}
\sum_{j\in J} |a_j|^2 \le K  \Bigl\|\sum_{j\in J} a_j\ff_j\Bigr\|^2_{L_2(\Omega, \mu)},\   \   \ \forall a_j\in\bbC
\end{equation}
whenever  $J\subset \{1,2,\ldots, N\}$ with $|J|=s$.
Let $\xi^1,\ldots, \xi^m$ be iid
random points  with common  distribution  $\mu$ on $\Omega$.
Then given  $1\le p\le 2$ and $\varepsilon\in (0,\frac12]$,
there exist constants $C_p(\varepsilon)>1$ and $c_p(\varepsilon)>0$,
{depending only on $p$ and $\varepsilon$}, such that the inequalities
\be\label{pdisc}
(1-\varepsilon)\|f\|_{L_p(\Omega, \mu)}^p \le \frac{1}{m}\sum_{j=1}^m |f(\xi^j)|^p \le (1+\varepsilon)\|f\|_{L_p(\Omega, \mu)}^p,\   \   \ \forall f\in  \Sigma_s(\cD_N)
\ee
hold with probability $\ge 1-2 \exp\left( -\frac {c_p(\varepsilon) m}{Ks\log^2 (2Ks)}\right)$ provided that
$$
m \ge  C_p(\varepsilon) Ks \log N\cdot (\log(2Ks ))^2\cdot (\log (2Ks )+\log\log N).
$$
\end{Theorem}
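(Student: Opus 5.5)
The plan is to prove a uniform deviation bound for the empirical process
$$
Z:=\sup_{f\in \Sigma^p_s(\cD_N)}\Bigl|\frac1m\sum_{j=1}^m |f(\xi^j)|^p-\|f\|_p^p\Bigr|,
$$
where $\Sigma^p_s(\cD_N)$ is the set of $s$-sparse $f$ with $\|f\|_{L_p(\Omega,\mu)}\le 1$, as defined earlier in Section \ref{ssr}. We show first that $\mathbb{E}Z\le \varepsilon/2$ once $m$ exceeds the stated threshold. A concentration inequality then upgrades this to the probability bound.

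\textbf{Step 1 (uniform boundedness).} Take $f=\sum_{j\in J}a_j\ff_j$ with $|J|\le s$. Condition (\ref{Bessel}), together with $\|f\|_2\le \|f\|_\infty^{1-p/2}\|f\|_p^{p/2}$ and Cauchy--Schwarz, gives
$$
\|f\|_\infty\le \|a\|_1\le s^{1/2}\|a\|_2\le (Ks)^{1/2}\|f\|_2 .
$$
Hence $\|f\|_\infty\lesssim (Ks)^{1/p}$ on $\Sigma^p_s$. It follows that $\Sigma^p_s(\cD_N)$ is contained in a multiple of the absolute convex hull of $\cD_N$.

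\textbf{Step 2 (symmetrization and chaining).} By Gin\'e--Zinn symmetrization,
$$
\mathbb{E}Z\le 2\,\mathbb{E}\sup_f\Bigl|\frac1m\sum_j\varepsilon_j|f(\xi^j)|^p\Bigr|.
$$
Conditionally on the sample, we bound this Rademacher average by Dudley's entropy integral in the empirical metric $\|\cdot\|_{L_\infty(\Omega_m)}$. For $p<2$ we use the standard trick of writing $\bigl||f|^p-|g|^p\bigr|\le p\,|f-g|\,(|f|+|g|)^{p-1}$, which gives a bound of the form $\sqrt{\mathbb{E}\,\cdot}$ times the entropy integral times $\sup_f\bigl(\frac1m\sum|f(\xi^j)|^p\bigr)^{1/2}$-type factors. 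We then resolve the resulting quadratic inequality in $\mathbb{E}Z$, as in Rudelson/Bourgain arguments.

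\textbf{Step 3 (entropy estimates, the main obstacle).} We need covering numbers of $\Sigma^p_s$ in $L_\infty(\Omega_m)$ at all scales, with logarithmic rather than polynomial dependence on $N$.

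For large scales we use Maurey's empirical method on the hull from Step 1. Approximating by averages of $k$ dictionary elements, with the sup over the $m$ points controlled by a union bound, gives
$$
\log \mathcal N(\Sigma_s^p,t)\lesssim \frac{Ks\,\log N\,\log m}{t^{2}}\quad(\text{suitably scaled}).
$$
For small scales we use the volumetric bound: the set is a union of $\binom Ns$ balls in $s$-dimensional spaces, so
$$
\log\mathcal N\lesssim s\log(eN/s)+s\log(C(Ks)^{1/p}/t).
$$
Splitting the Dudley integral at $t\approx (Ks)^{-1/2}$ and integrating produces the factor
$$
\sqrt{Ks\log N\,(\log 2Ks)^2(\log 2Ks+\log\log N)/m}.
$$
The $\log m$ coming from Maurey is converted into $\log(2Ks)+\log\log N$ at the critical $m$. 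This bookkeeping, and getting exactly these logarithmic powers, is where the technical difficulty lies.

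\textbf{Step 4 (concentration).} Apply Talagrand's inequality (Bousquet's form) to $Z$. The uniform bound is $\|\,|f|^p\|_\infty\lesssim Ks$ and the variance is $\lesssim Ks$. This gives
$$
\mathbb{P}(Z>\varepsilon)\le 2\exp\bigl(-c\,\varepsilon^2 m/(Ks\log^2(2Ks))\bigr),
$$
where the extra logarithms are absorbed from the level of $\mathbb{E}Z$. Choosing $C_p(\varepsilon)$ large then yields (\ref{pdisc}) with the stated probability.
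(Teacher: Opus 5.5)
The paper does not prove this statement; it quotes it from \cite{DKT} (originally \cite{DTM2}), so I am judging your sketch on its own. For $p=2$ your scheme is the Rudelson--Vershynin argument and does give the stated threshold. Bousquet's inequality even yields the better tail $\exp(-c\varepsilon^2m/(Ks))$. For $1\le p<2$, however, Steps~2--3 cannot give a sample size linear in $Ks$. The obstruction is a power of $Ks$, not logarithmic bookkeeping.

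With the Lipschitz bound $\bigl||f|^p-|g|^p\bigr|\le p|f-g|(|f|+|g|)^{p-1}$, the Rademacher metric is controlled only by a constant times $m^{-1/2}\|f-g\|_{\ell_\infty^m}$. Hence the bound you get from Dudley's integral is at least of order $m^{-1/2}$ times the $\ell_\infty^m$-radius of $\Sigma_s^p(\cD_N)$, since the set is symmetric and below that radius at least two balls are needed. That radius can be $(Ks)^{1/p}$. Take the characters of a finite abelian group ($K=1$) and $f=s^{1/p}\mathbf{1}_H$ with $H$ a subgroup of index $s$. Then $f$ is a combination of the $s$ characters trivial on $H$, $\|f\|_p=1$, its coefficient vector has $\ell_1$-norm $s^{1/p}$, and $\|f\|_{\ell_\infty^m}=s^{1/p}$ as soon as one sample point lies in $H$. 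So your route forces $m\gtrsim (Ks)^{2/p}$, for example $m\gtrsim s^2$ when $p=1$. The same example shows that Step~1 gives $\Sigma_s^p\subset (Ks)^{1/p}A_1(\cD_N)$ and no better. Maurey then gives $\log\mathcal N\lesssim (Ks)^{2/p}\log N\log m/t^2$. The bound $Ks\log N\log m/t^2$ you write is the $p=2$ bound, and no rescaling removes this exponent mismatch.

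Two ingredients are missing.
\begin{enumerate}
\item Control the increments through the $p/2$-H\"older bound $\bigl||a|^p-|b|^p\bigr|\le |a-b|^{p/2}\bigl(|a|^{p/2}+|b|^{p/2}\bigr)$. This gives the metric bound $\lesssim m^{-1/2}\|f-g\|_{\ell_\infty^m}^{p/2}\sup_h\bigl(\frac1m\sum_j|h(\xi^j)|^p\bigr)^{1/2}$. This bound, not the Lipschitz one, is where a factor of the type you mention actually comes from.
\item You need an entropy estimate $\log\mathcal N(\Sigma_s^p,\ell_\infty^m,t)\lesssim Ks\,t^{-p}\log N\log m$, i.e.\ entropy numbers decaying like $k^{-1/p}$ rather than $k^{-1/2}$. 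This does not follow from the hull inclusion; it requires a dyadic bootstrap from the $L_2$ case.
\end{enumerate}
The bootstrap goes as follows. If two elements of $\Sigma_s^p$ are $T$-close in $\ell_\infty^m$, then $\|h\|_2\le\|h\|_\infty^{1-p/2}\|h\|_p^{p/2}$ (in the empirical norms) shows their difference has $L_2(\mu_m)$-norm $\lesssim T^{1-p/2}$. Its coefficient $\ell_1$-norm is then $\lesssim (Ks)^{1/2}T^{1-p/2}$. Maurey at scale $T/4$ therefore costs $\lesssim Ks\,T^{-p}\log N\log m$, and these costs form a geometric series over the levels.

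This step uses a Bessel-type inequality for the \emph{empirical} measure. So you must first establish the $p=2$ case and then run the $p<2$ chaining on that event. With both ingredients the Dudley integral becomes $\lesssim\sqrt{Ks\log N\log m/m}\,\log(2s)$, up to the factor $\sqrt{1+\mathbb{E}Z}$. Solving for $\mathbb{E}Z$ and using $\log m\approx\log(2Ks)+\log\log N$ then reproduces the stated threshold.
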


 Note that our assumption that $\Phi_N$ belongs to $\mathcal{R}(s,R_1,R_2)$ implies assumption (\ref{Bessel}) in Theorem \ref{IT1} with $K= R_1^{-2}$.
We now apply Theorem \ref{IT1} with $p=2$, $\varepsilon =1/2$ and obtain that with a high probability the assumption of the universal discretization in Theorem \ref{IT2} follows from the assumption that the system $\Phi_N$ is uniformly bounded and   $\Phi_N\in \mathcal{R}(s,R_1,R_2)$. 

As a result of the above discussion we obtain Theorem \ref{IT3} formulated in the Introduction.

 \begin{Remark}\label{InR1a} For a fixed $K$ Theorem \ref{IT3} works for 
 $$
 m\gg s (\log N) (\log s )^2\cdot (\log s +\log\log N).
 $$ 
 We obtain this bound from Theorem \ref{IT1} on the universal discretization, which holds for the $L_p$ norms, $1\le p\le 2$.
 We only use the case $p=2$.  It is known (see \cite{DTM2}) that the problem of universal discretization of the $L_2$ norm 
 is equivalent to the study of the RIP problem. This and recent results on the RIP problem (see \cite{HR} and \cite{BDJR}) allow us to improve the bound for $m$ in Theorem \ref{IT3} to $m\gg s (\log N) (\log s )^2 $ under an extra assumption that the system $\Phi_N$ is the Riesz basis.
 \end{Remark}

\subsection{Finite-dimensional case and noisy values}
\label{subsec_finite_dim}

In practical applications, e.g. the sparse channel recovery problem
(see Subsection \ref{sub_channel}), signals are complex vectors in some
finite-dimensional space. We discuss here an application of the above theoretical
results and explain the main ingredients of the proofs in this
finite-dimensional setting.

One can identify vectors in $\bbC^M$ with functions defined on the domain
  $\Omega=\{1,\ldots,M\}$. The measure $\mu$
is defined  as  $\mu(\bx^i)=1/M$, $i=1,\dots,M$. 
The $i$-th coordinate of a vector $f\in\mathbb{C}^M$ is denoted as $f(i)$.
The standard euclidean norm 
$$
\|f\|_2=(\sum_{i=1}^M |f(i)|^2)^{1/2}
$$
is considered. Note that the $L_2(\Omega,\mu)$-norm of $f$ equals
$M^{-1/2}\|f\|_2$ due to the normalization.

\paragraph{Noisy values.}

Given a sequence $I_m=(i_1,\ldots,i_m)$ of coordinates, we denote
$f[I_m]:=(f(i_1),\ldots,f(i_m))\in\bbC^m$.

Let $\Phi\subset\bbC^M$ be a dictionary and $f^*\in\Sigma_v(\Phi)$ be a clean
signal. Consider noisy values
$$
y_k = f^*(i_k) + w_k,
\quad k=1,\ldots,m,
$$
where $w\in\bbC^m$ is some noise. An equivalent notation: $y = f^*[I_m] + w$. Our
goal is to recover $f^*$ using vector $y$.

This is a particular case of the problem we considered before: recover
$v$-sparse (with respect to a dictionary) function $f^*\colon
\Omega\to\bbC$ given the values $\left.f\right|_{\Omega_m}$, where
$\Omega_m:=\{\bx^{i_1},\ldots,\bx^{i_m}\}$. (Here we assume for simplicity that
all $i_k$ in $I_m$ are distinct.)
Besides, we have the additional condition that the ``noise''
is concentrated on the set $\Omega_m$:
$$
w(\bx):=f(\bx)-f^*(\bx)=0\quad \mbox{for} \quad \bx \in \Omega \setminus \Omega_m.
$$

Let us return to the vector case.
The solution of the problem is divided into two steps. First, we recover $f^*[I_m]$ by
constructing an approximation
\begin{equation}
    \label{f_approx}
\hat y = \sum_{k=1}^l \hat c_k g_k[I_m],\quad g_k \in \Phi.
\end{equation}
Second, we guarantee that if
$\hat y$ approximates
$f^*[I_m]$ then $\hat f = \sum_{k=1}^l \hat c_k g_k$ approximates $f^*$.

\paragraph{Riesz dictionaries.}
We will keep the normalization condition
$\|g\|_\infty\le 1$, so it is reasonable to formulate the Riesz property of a
dictionary $\Phi\subset\bbC^M$ in the following way:
\begin{equation}
    \label{riesz_discrete}
R_1 \|a\|_2
    \le \frac{1}{\sqrt{M}}\left\|\sum_{k=1}^s a_k g_k\right\|_2
\le R_2 \|a\|_2,
\quad\forall\,g_1,\ldots,g_s\in\Phi_N,\; a\in\mathbb C^s.
\end{equation}

Theorem~\ref{ssrT1} of Livshitz and Temlyakov guarantees that any such dictionary $\Phi$
with bounded $R_2/R_1$ ratio allows good recovery in the following way. If $v+cv\le s$, then for
any $f^*\in\Sigma_v(\Phi)$ the OMP algorithm with the initial vector $f=f^*+w$
after $cv$ steps returns the approximation
$\hat f$ that satisfies
$\|\hat f - f^*\|_2 \le C \|w\|_2$. We use this in the first step.

\paragraph{Discretization.}
Let us repeat the definition of the universal discretization in
finite-dimensional setting. A sequence $I_m=(i_1,\ldots,i_m)$ of
coordinates provides the discretization for a set of vectors
$V\subset\bbC^M$, if
$$
\frac{1}{2M}\|f\|_2^2
\le \frac{1}{m} \|f[I_m]\|_2^2
\le \frac{3}{2M} \|f\|_2^2
\quad \mbox{for any $f\in V$}.
$$

If the discretization condiction holds for the set $V=\Sigma_s(\Phi)$, and
$l+v\le s$, then we clearly have
(see~\eqref{f_approx})
$$
\frac1M\|f^*-\hat f\|_2^2
\le \frac{2}{m}\|f^*[I_m]-\hat y\|_2^2.
$$
This is what we need for the second step.

Theorem~\ref{IT1} of Dai and Temlyakov bounds the number of coordinates
$m$ used for the discretization. Note that this bound does not depend on $M$.

Now let us formulate a precise statement.

\begin{Corollary}
Let $\Phi_N\subset\mathbb{C}^M$ be a set of $N$ vectors, such that
    $\|g\|_\infty\le 1$ for all $g\in\Phi_N$, and the
    property~\eqref{riesz_discrete} is true with $R_2/R_1\le C_0<\infty$.
Let $I_m:=(i_1,\ldots,i_m)$ be a sequence of $m$ i.i.d. coordinates in
    $\{1,\ldots,M\}$, the number $m$ satisfies the
    condition~\eqref{IT3_m}.
    Then with high probability  the following holds.

    Consider any vector $f^*=\sum_{k=1}^v c_k g_k$, $g_k\in\Phi_N$, where
    $(1+c)v\le s$.
    Run the OMP algorithm with the initial vector $y:=f^*[I_m]+w$, where
    $w\in\bbC^m$ is some vector  (noise), and the dictionary
    $\Phi_N[I_m] := \{g[I_m]\colon g\in\Phi_N\}$.
Let $\hat y=\sum_{k\le cv}\hat c_k g_k[I_m]$ be the
approximation obtained after $cv$ steps.
Then the following inequality holds
$$
    \frac1M\|f^*-\hat f\|_2^2 \le  \frac{C}m\|w\|_2^2,
    \quad\mbox{where $\hat f := \sum_{k\le cv} \hat c_k g_k$}.
$$
\end{Corollary}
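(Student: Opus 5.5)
The plan is to combine the two ingredients described just before the statement: the discretization theorem (Theorem~\ref{IT1}) supplies a good set of sample coordinates, and the Lebesgue-type inequality (Theorem~\ref{ssrT1}) controls the OMP on the sampled dictionary.

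\textbf{Step 1: discretization.} Identify $\bbC^M$ with functions on $\Omega=\{1,\dots,M\}$ with the uniform probability measure $\mu$. Then \eqref{riesz_discrete} says exactly that $\Phi_N\in\cR(s,R_1,R_2)$ in $L_2(\Omega,\mu)$, and $\|g\|_\infty\le1$ is \eqref{I9}. By Theorem~\ref{IT1} with $p=2$, $\varepsilon=1/2$, $K=R_1^{-2}$, and $m$ as in \eqref{IT3_m}, with high probability the i.i.d.\ coordinates $I_m$ satisfy
\[
\frac{1}{2M}\|h\|_2^2\le \frac1m\|h[I_m]\|_2^2\le \frac{3}{2M}\|h\|_2^2
\quad\text{for all } h\in\Sigma_s(\Phi_N).
\]
Repeated indices cause no trouble here: we simply work in $L_2$ of the empirical measure $\mu_m$ on $I_m$ with multiplicities. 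Fix such an $I_m$; everything below is deterministic.

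\textbf{Step 2: the Lebesgue-type inequality on the samples.} Combining the discretization with \eqref{riesz_discrete}, for $u\le s$ and disjoint $A,\Lambda$ with $|A|+|\Lambda|\le u$ we get
\[
\Bigl\|\sum_{A}x_ig_i[I_m]-\sum_\Lambda c_ig_i[I_m]\Bigr\|_{L_2(\mu_m)}\ge 2^{-1/2}R_1\|x\|_2\ge 3^{-1/2}(R_1/R_2)\Bigl\|\sum_A x_ig_i[I_m]\Bigr\|_{L_2(\mu_m)} .
\]
This is the argument of Proposition~\ref{InP1}. Hence $\Phi_N[I_m]$ has UP$(v,s)$ with $U=3^{1/2}R_2/R_1\le 3^{1/2}C_0$. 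Normalization of the dictionary does not affect either UP or the OMP iterates.

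Take $c=c_{\ref{ssrT1}}(1,U)$, so that $v+cv\le s$. Theorem~\ref{ssrT1} then gives
\[
\|y-\hat y\|_{L_2(\mu_m)}\le C\sigma_v(y,\Phi_N[I_m])\le C\|y-f^*[I_m]\|_{L_2(\mu_m)}=C\,m^{-1/2}\|w\|_2 .
\]
By the triangle inequality, $\|f^*[I_m]-\hat y\|_2\le (1+C)\|w\|_2$.

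\textbf{Step 3: lifting back to $\bbC^M$.} The vector $f^*-\hat f$ lies in $\Sigma_{v+cv}(\Phi_N)\subset\Sigma_s(\Phi_N)$, and $(f^*-\hat f)[I_m]=f^*[I_m]-\hat y$. The left discretization inequality therefore yields
\[
\frac1M\|f^*-\hat f\|_2^2\le \frac2m\|f^*[I_m]-\hat y\|_2^2\le \frac{2(1+C)^2}{m}\|w\|_2^2 .
\]

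\textbf{Main obstacle.} The only delicate point is Step 2: checking that UP transfers to the sampled, renormalized dictionary with a constant depending only on $C_0$. This is what keeps $c$ and the final constant absolute, given the bound on $R_2/R_1$. A secondary point is that $\hat f$ is well defined when the $g_k[I_m]$ are linearly dependent; the discretization bound on $\Sigma_s$ rules this out, since any $h$ in the span of at most $s$ dictionary elements with $h[I_m]=0$ must vanish.
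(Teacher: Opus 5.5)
Your argument is correct, but it does not follow the paper's own proof. The paper extends the data to a vector $f\in\bbC^M$: it equals $f^*$ off the sample and the observations on it, with averaging over repeated indices. It then applies \eqref{Inmp3} of Theorem~\ref{IT3} with the mixed measure $\mu_\xi$, bounds $\sigma_v(f,\Phi_N)_{L_2(\mu_\xi)}$ by $\|f-f^*\|_{L_2(\mu_\xi)}$, and finishes with a triangle inequality in $\bbC^M$.

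You stay in $\bbC^m$ instead. Theorem~\ref{IT1} gives UP for $\Phi_N[I_m]$, Theorem~\ref{ssrT1} gives the Lebesgue inequality on the samples with $\sigma_v\le\|w\|$, and the left discretization inequality on $\Sigma_s(\Phi_N)$ lifts the error back. This is the two-step scheme sketched just before the corollary, and the same mechanism used inside the proof of Theorem~\ref{IT2}. The paper's route gains by reusing a packaged theorem. Yours is shorter, gives the explicit constant $2(1+C)^2$, and is more robust:
\begin{itemize}
\item repeated indices need no averaging, because the noise is indexed by sample number rather than by coordinate;
\item you never pick up the term $\frac1M\|w\|_2^2$ that the paper's final combination produces, which is dominated by $\frac1m\|w\|_2^2$ only when $m\lesssim M$. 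That holds automatically for distinct indices, but not once indices repeat.
\end{itemize}

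One correction in Step 2. Rescaling individual dictionary elements does change the OMP selection rule; only a common rescaling is harmless. What does hold is
\[
2^{-1/2}R_1\le\|g[I_m]\|_{L_2(\mu_m)}\le(3/2)^{1/2}R_2 .
\]
Hence OMP on $\Phi_N[I_m]$ is a WOMP on the normalized dictionary with weakness
\[
t\ge R_1/(3^{1/2}R_2)\ge 1/(3^{1/2}C_0),
\]
while the spans, the projections and hence $\hat y$ are unchanged. So you should take $c=c_{\ref{ssrT1}}(1/(3^{1/2}C_0),\,3^{1/2}C_0)$ rather than $c_{\ref{ssrT1}}(1,U)$. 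The constant $c$ still depends only on $C_0$, so the conclusion stands. The paper's proof of Theorem~\ref{IT2} passes over the same point.
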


\begin{proof}
    Define a vector $f$ as follows: $f(i):=f^*(i)$ when
    $i\not\in\{i_1,\ldots,i_m\}$,
    and $f(i_k):=y_k$, $k=1,\ldots,m$.
    Here we assume for simplicity that any
    index $i_k$ appears in $I_m$ only once.
    We apply  inequality (\ref{Inmp3}) of  Theorem~\ref{IT3} to the vector $f$, the dictionary $\Phi_N$ and the
    probability measure $\mu:=(1/M)\sum_{i=1}^M\delta_i$ and get that
    \begin{multline*}
    \frac1M\|f-\hat f\|_2^2
    = \|f-\hat f\|_{L_2(\mu)}^2
    \lesssim \sigma_v(f,\Phi_N)_{L_2(\mu_\xi)}^2
    \le \|f-f^*\|_{L_2(\mu_\xi)}^2 = \\
    = \frac12 \|f-f^*\|_{L_2(\mu)}^2 + \frac12\|f-f^*\|_{L_2(\mu_m)}^2
        \le \frac1{2M} \|w\|_2^2 + \frac1{2m} \|w\|_2^2.
    \end{multline*}

    Moreover, $\|f^*-\hat f\|_2 \le \|f^*-f\|_2 + \|f-\hat f\|_2 \le \|w\|_2 + \|f-\hat
    f\|_2$. The combination of these bounds gives the required result.

    Finally, let us return to the technical subtlety: if an index $i$ appears in $I_m$ more
    than once, we define $f(i)$ as the average of all corresponding $y_k$;
    hence, $f(i)=f^*(i)+\mathrm{avg}\{w_k\colon i_k=i\}$.
    It is easy to check that $\|f-f^*\|_2 \le \|w\|_2$ in this case, too.
    Note that the OMP applied to the vector $y$ in the space $\mathbb{C}^m$ and
    the dictionary $\Phi_N[I_m]$ is equivalent to the OMP
    applied to $f$ and the dictionary $\Phi_N$ in the space $L_2(\mu_m)$,
    $\mu_m:=(1/m)\sum_{j=1}^m\delta_{i_j}$. So the proof remains correct.
\end{proof}

\section{Recovery with coherent dictionaries}
\label{FD}
Theorem \ref{FDT1} (see below) gives upper bounds for the best $m$-term
approximation and Theorem \ref{FDT1} holds in a rather general situation.  We
now give a comment on performance of a greedy algorithm. We assume that $\mathcal{H}$ is
a Hilbert space and a dictionary $\cD$ satisfies some special conditions. We
discuss the OMP greedy algorithm. It is known (see, for instance,
\cite{VTbookMA}, Section 8.7.2)  that under some conditions on the dictionary
$\cD$ the OMP provides the Lebesgue-type inequality \be\label{LebI}
\|f_{cK}\| \le C\sigma_K(f,\cD)
\ee
with absolute constant $C$ and constant $c$, which may depend on 
characteristics of the dictionary $\cD$. As the corresponding conditions on the
dictionary $\cD$ the RIP and the somewhat more general condition {\bf A2} (see
\cite{VTbookMA}, p.426) can be taken. Then, Theorem \ref{FDT1} and the Lebesgue
inequality (\ref{LebI}) give a very good upper bound on the norm of the
residual. However, it might not be convenient to use this way in practical
applications because in many practical problems the dictionaries have large RIP
constants and even the coherence may be close to $1$.

A typical  example of a coherent dictionary is the oversampled Fourier dictionary.
The oversampling occurs in different situations; the one particular that motivated us
is the problem of sparse channel recovery in wireless communication.
In Subsection~\ref{sub_mu_plus} we give a theoretical result for the OMP
algorithm with coherent dictionaries and in Subsection~\ref{sub_channel} we
apply this result to the sparse channel recovery problem. In
Subsection~\ref{sub_known_coherence}, we briefly discuss some known results on
OMP recovery with incoherent dictionaries.

In Subsection \ref{ss} we discuss an example
also oriented to practical applications
--- the sparse approximation of elements, which have expansions with exponentially fast 
decaying coefficients.

\subsection{Some known results for OMP with coherent dictionaries}
\label{sub_known_coherence}

We cite two theorems on OMP recovery with dictionaries that have small
coherence. The first one is~\cite[Theorem 5.1]{DET}.

\begin{OldTheorem}
    Suppose that $\mathcal D$ is a dictionary with $M(\cD)\le \mu$, the
    clean signal $y_0$ is $K$-sparse and we observe a noisy signal
    $$
    y = y_0 + w,
    \quad y_0 = \sum_{k=1}^K c_k g_k,\quad g_k\in\cD,
    \quad\|w\|\le\e.
    $$

    If the inequality holds:
    $$
    (2K-1)\mu + \frac{2\e}{\min_k|c_k|} \le 1,
    $$
    then the OMP algorithm with the dictionary $\cD$, the initial vector $y$ and
    the stopping criterion $\|r_l\|_2\le\e$ has the following properties:
    \begin{itemize}
        \item it recovers correctly all elements $g_k$, $k=1,\ldots,K$, and only
            them;
        \item the approximation error for the coefficients satisfies
            $$
            \|\hat c - c\|_2^2 \le \e^2(1-\mu(K-1))^{-1}.
            $$
    \end{itemize}
\end{OldTheorem}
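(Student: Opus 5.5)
The plan is to argue by induction on the OMP iterations. The inductive hypothesis is that the set $T$ of already selected atoms satisfies $T\subsetneq S:=\{g_1,\dots,g_K\}$. Let $P_T$ be the orthogonal projection onto $\sp(T)$. The residual is
$$
r=(I-P_T)y_0+(I-P_T)w .
$$
Since $P_T y_0\in\sp(T)\subset \sp(S)$, we can write $(I-P_T)y_0=\sum_{k=1}^K d_kg_k$ with $d_k=c_k$ for $g_k\notin T$. Moreover, $\<r,g\>=0$ for every $g\in T$ by orthogonality. The noise part satisfies $\|(I-P_T)w\|\le\|w\|\le\e$, so its inner product with any atom is at most $\e$ in absolute value.

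\textbf{Key step 1 (correct selection).} Let $k_0$ maximize $|d_k|$ over all $k\le K$. Using $\|g\|=1$ and the coherence bound, I expect
$$
|\<r,g_{k_0}\>|\ge |d_{k_0}|\bigl(1-(K-1)\mu\bigr)-\e,
\qquad
|\<r,g\>|\le K\mu|d_{k_0}|+\e\quad (g\notin S).
$$
If $g_{k_0}\in T$, the left-hand side is $0$. The first bound would then force $|d_{k_0}|(1-(K-1)\mu)\le\e$, which is incompatible with $|d_{k_0}|\ge \max_{g_k\notin T}|c_k|\ge\min_k|c_k|$ and the hypothesis. Hence $g_{k_0}\notin T$, and a true atom outside $T$ beats every false atom provided $|d_{k_0}|(1-(2K-1)\mu)>2\e$. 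This follows from $(2K-1)\mu+2\e/\min|c_k|\le1$, since $|d_{k_0}|\ge\min_k|c_k|$. I expect the main obstacle to be exactly here: the equality case of the hypothesis gives only $\ge$. I would first try to handle it by noting that the lower bound uses $|d_{k_0}|\ge \min|c_k|$, with slack whenever $|d_{k_0}|$ is not minimal; in the genuinely tight case I would adopt the reading from \cite{DET} that ties may be broken in favour of a true atom, or else assume strict inequality. Consequently, OMP never selects a false atom, and an index already in $T$ is never re-selected, since its correlation is $0$.

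\textbf{Key step 2 (stopping exactly after $K$ steps).} While $S\setminus T\neq\emptyset$, I would check that $\|r\|>\e$, so that the algorithm does not stop early. Indeed,
$$
\|r\|\ge|\<r,g_{k_0}\>|\ge|d_{k_0}|(1-(K-1)\mu)-\e>\e,
$$
where the last inequality uses the same hypothesis as in Step 1, because $(1-(K-1)\mu)\ge (1-(2K-1)\mu)$. Once $T=S$, we have $(I-P_S)y_0=0$, hence $r=(I-P_S)w$ and $\|r\|\le\e$, so the algorithm stops with exactly the true support.

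\textbf{Key step 3 (coefficient error).} With support $S$, the output coefficients are the least squares solution $\hat c=G^{-1}\Phi_S^*y$, where $\Phi_S$ is the synthesis operator of $S$ and $G=\Phi_S^*\Phi_S$ is the Gram matrix. Hence $\hat c-c=G^{-1}\Phi_S^*w$. I would estimate this as
$$
\|\hat c-c\|_2^2=\|(\Phi_S^*\Phi_S)^{-1}\Phi_S^*w\|^2\le \|w\|^2/\lambda_{\min}(G).
$$
The inequality holds because the pseudoinverse $(\Phi_S^*\Phi_S)^{-1}\Phi_S^*$ has operator norm $\lambda_{\min}(G)^{-1/2}$. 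Gershgorin's theorem, with unit diagonal and off-diagonal entries bounded by $\mu$, gives $\lambda_{\min}(G)\ge1-(K-1)\mu$. This yields $\|\hat c-c\|_2^2\le\e^2(1-\mu(K-1))^{-1}$, which is the claimed bound.
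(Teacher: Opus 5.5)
The paper does not prove this statement; it is quoted from \cite{DET}. Your argument is the standard one for it. You compare the correlation of the atom carrying the largest current coefficient $d_{k_0}$ with that of any false atom, observe that the residual keeps the same form with noise $(I-P_T)w$ of norm at most $\e$, and bound the least-squares error through $\lambda_{\min}(G)\ge 1-(K-1)\mu$. Under the strict inequality $(2K-1)\mu+2\e/\min_k|c_k|<1$ the proof is complete and correct.

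The one point that needs fixing is the equality case, which the statement explicitly allows, and there your diagnosis is slightly off.

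In Step 2 the strict bound $\|r\|>\e$ does not follow from $1-(K-1)\mu\ge 1-(2K-1)\mu$. It does follow when $\mu>0$, because then $1-(K-1)\mu>1-(2K-1)\mu$ and $d_{k_0}\ne 0$.

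In Step 1, if $\e>0$, no tie can occur at all. Equality in your lower bound forces $|\langle (I-P_T)w,g_{k_0}\rangle|=\e$, and equality in your upper bound forces $|\langle (I-P_T)w,g\rangle|=\e$. Together these make $(I-P_T)w$ a unimodular multiple of both $\e g_{k_0}$ and $\e g$, i.e.\ $|\langle g,g_{k_0}\rangle|=1$. This contradicts $\mu<1$, which holds since $\e>0$. So for $\e,\mu>0$ the non-strict statement is true and needs no tie-breaking convention.

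In the degenerate cases the statement as written is actually false.
\begin{itemize}
\item For $\e=0$ and $(2K-1)\mu=1$ a genuine tie can occur. Take $K=2$, $\mu=1/3$, and unit vectors with $\langle g_1,g_2\rangle=-1/3$ and $\langle g_1,g\rangle=\langle g_2,g\rangle=1/3$; then $y=g_1+g_2$ has correlation $2/3$ with all three atoms. Your convention of breaking ties in favour of true atoms handles this case.
\item For $\mu=0$ the failure is premature stopping, which no convention repairs. Take an orthonormal dictionary, $c=(2\e,2\e)$ and $w=-\e g_2$, so the hypothesis holds with equality. OMP selects $g_1$, the residual $\e g_2$ has norm $\e$, and the algorithm stops without $g_2$, giving $\|\hat c-c\|_2^2=4\e^2$.
\end{itemize}

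So you should state the result either with strict inequality, or with $\e,\mu>0$ and the two refinements above.
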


The next result is~\cite[Theorem 5.3]{TGS06}. 
Instead of the $\ell_2$-norm of the noise $w$ it uses its max-correlation
norm~\eqref{maxcorr_norm}. The difference is important in some settings, e.g.
when the noise is Gaussian (see Subsection~\ref{sub_channel}).
We cite a corollary of this theorem (see also~\cite[Theorem 18]{Tr06}).

\begin{OldTheorem}
\label{th_tropp}
    Suppose that $\mathcal D$ is a dictionary with $M(\mathcal D)\le \mu$, the
    input signal $y$ has the best $K$-term approximation $y_0$ such that
    $$
    y=y_0+w,
    \quad y_0 = \sum_{k=1}^K c_kg_k,
    \quad \|w\|_{\cD}\le \gamma\frac{1-2K\mu}{1-K\mu}.
    $$
    Then the OMP algorithm with the dictionary $\cD$, the initial vector $y$ and
    the stopping criterion $\|r_l\|_\cD\le \gamma$ has the following properties:
    \begin{itemize}
        \item it selects only elements from the set $\{g_k\}_{k=1}^K$;
        \item it selects all $g_k$ with the coefficients
            $|c_k|>\gamma(1-2K\mu)^{-1}$.
    \end{itemize}
\end{OldTheorem}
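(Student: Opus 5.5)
The statement is a corollary of \cite[Theorem 5.3]{TGS06}. I would prove it by induction on the OMP iterations, with the invariant that the selected set $\Lambda_l$ is contained in $T:=\{g_1,\dots,g_K\}$. I assume $2K\mu<1$, as the hypothesis implicitly requires.

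\emph{Structure of the residual.} Since $y_0$ is the best $K$-term approximation of $y$ and is supported on $T$, it is in particular the best approximation from $\sp(T)$. Hence $w=y-y_0$ is orthogonal to $\sp(T)$, and I would write $\|x\|_T:=\max_{g\in T}|\<x,g\>|$. If $\Lambda_l\subset T$, then $P_{\sp\Lambda_l}w=0$, so
$$
r_l=s_l+w,\qquad s_l:=y_0-P_{\sp\Lambda_l}y_0\in\sp(T).
$$
Moreover, the coefficient of every unselected $g_k$ in the expansion of $s_l$ over $T$ is exactly $c_k$, because the projection only alters coefficients on $\Lambda_l$.

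\emph{Coherence estimates.} For $x=\sum_{g\in T}a_g g$ the Gram matrix $G_T$ satisfies $\|I-G_T\|_{\infty\to\infty}\le (K-1)\mu$. This gives
$$
\|a\|_\infty\le \frac{\|x\|_T}{1-(K-1)\mu}
\qquad\text{and, for } h\notin T,\qquad
|\<x,h\>|\le K\mu\|a\|_\infty\le \rho\|x\|_T,\quad \rho:=\frac{K\mu}{1-(K-1)\mu}.
$$
This is Tropp's exact recovery coefficient bound via the Neumann series.

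\emph{Induction step (claim (i)).} Inside $T$, orthogonality gives $\<r_l,g\>=\<s_l,g\>$, so the maximal correlation over $T$ equals $\|s_l\|_T$. Outside $T$, the maximal correlation is at most $\rho\|s_l\|_T+\|w\|_\cD$. A wrong choice is therefore possible only if $\|s_l\|_T\le \|w\|_\cD/(1-\rho)$. In that case
$$
\|r_l\|_\cD\le\max\bigl(\|s_l\|_T,\ \rho\|s_l\|_T+\|w\|_\cD\bigr)\le \frac{\|w\|_\cD}{1-\rho}.
$$
Now $1-\rho=\frac{1-(2K-1)\mu}{1-(K-1)\mu}\ge\frac{1-2K\mu}{1-K\mu}$; this elementary inequality reduces to $K\mu^2\ge0$. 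So the noise hypothesis gives $\|r_l\|_\cD\le\gamma$, and the stopping criterion fires before any wrong element is chosen. Hence (i) holds.

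\emph{Claim (ii).} At termination $\|s_l\|_T\le\|r_l\|_\cD\le\gamma$. Each unselected $g_k$ has coefficient $c_k$ in $s_l$, so $|c_k|\le \gamma/(1-(K-1)\mu)\le\gamma/(1-2K\mu)$. Therefore every $g_k$ with larger $|c_k|$ has been selected. OMP also terminates in at most $K$ steps, since it never reselects an element and only picks from $T$.

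The main obstacle I expect is the step above where I use that $w\perp\sp(T)$. This relies on reading ``best $K$-term approximation'' as implying that $y_0$ is the projection of $y$ onto $\sp(T)$. If that were not available, I would instead bound $\|P_{\sp\Lambda_l}w\|_\cD$ through the same Gram estimates, at the price of worse constants. The remaining work is the Neumann-series bounds and the constant comparison.
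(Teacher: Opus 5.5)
Your proof is correct. The paper gives no proof of this statement; it is quoted as a corollary of \cite[Theorem 5.3]{TGS06}. Your argument is essentially Tropp's original one: $\rho=K\mu/(1-(K-1)\mu)$ plays the role of the coherence bound on $1-\mathrm{ERC}(T)$, and the inverse-Gram estimate $\|a\|_\infty\le\|x\|_T/(1-(K-1)\mu)$ controls the missed coefficients. The step you flag as the main obstacle is not one. Since $\sp(T)\subset\Sigma_K(\cD)$, a best $K$-term approximant supported on $T$ must equal $P_{\sp T}y$. So $w\perp\sp(T)$ is exactly what the hypothesis provides, and it is what removes the noise from the true-atom correlations.

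The closest argument the paper actually carries out is the proof of Theorem~\ref{thm_mu_plus}, and it is organized differently. There $P_{\sp\Lambda_l}w$ is absorbed into the coefficients, $r_l=\sum_k c_k'g_k+w$, and no orthogonality of $w$ is used. It compares the atom carrying the largest residual coefficient $c^*$ (correlation at least $c^*(1-\mu^+)-\theta$) with any false atom (correlation at most $c^*(\mu+\mu^+)+\theta$). Since the noise enters on both sides, the coefficient threshold there is $\hat\beta=2\theta/(1-\mu-2\mu^+)$.

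Your correlation-based bookkeeping transfers verbatim to the $\mu^+$ setting. One has $\|a\|_\infty\le\|x\|_T/(1-\mu^+)$ and $|\<x,h\>|\le(\mu+\mu^+)\|a\|_\infty$ for $h\notin T$, so $\rho=(\mu+\mu^+)/(1-\mu^+)$, and $\rho<1$ is exactly $\mu+2\mu^+<1$. With stopping level $\gamma=\theta/(1-\rho)$ it gives the threshold $\theta/(1-\mu-2\mu^+)$, half of $\hat\beta$, but only when $w\perp\sp(T)$.

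That is the trade-off between the two routes. In your (Tropp's) formulation the noise is $y-P_{\sp T}y$ and the $c_k$ are the coefficients of $P_{\sp T}y$. A physical noise not orthogonal to the signal span, such as the Gaussian $W$ of Subsection~\ref{sub_channel}, therefore perturbs the coefficients being thresholded. The paper's coefficient-based argument instead works directly with the clean coefficients and arbitrary noise satisfying $\|w\|_\cD\le\theta$, and pays the factor $2$ for that robustness.
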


\subsection{Recovery guarantees for coherent dictionaries}
\label{sub_mu_plus}
We recall the definition of $M^+(\cD_1,\cD)$ quantity from the Introduction. 
Namely:
$
M^+(\cD_1,\cD) := \max_{g\in\mathcal D} \left\{\sum_{h\in\mathcal
D_1}|\langle g,h\rangle| - \max_{h\in\mathcal D_1}|\langle g,h\rangle|\right\}.
$

From the definition we see that for any $g\in\cD$ and $h_0\in\cD_1$ we have
\begin{equation}
    \label{mu_plus_cor1}
\sum_{h\in\cD_1}|\langle g,h\rangle|
\le M(\cD) + M^+(\cD_1,\cD),
\end{equation}
\begin{equation}
    \label{mu_plus_cor2}
\sum_{h\in\cD_1,\;h\ne h_0}|\langle h,h_0\rangle|
\le M^+(\cD_1,\cD).
\end{equation}

We proceed to the proof of Theorem \ref{thm_mu_plus}
\begin{proof} 
    The selected thresholds satisfy the equalities:
    \begin{equation}
        \label{corr_equality}
    \hat\beta(1-\mu^+) - \theta = \hat\beta(\mu + \mu^+) + \theta = \hat\theta.
    \end{equation}

    Let us call a coeffient $c_k$ \emph{large}, if $|c_k|>\hat\beta$.

    We carry out induction on the step number. After step $l-1$ we have
    $$
    r_{l-1} = y - \sum_{j=1}^{l-1} c_j^* g_j^*,
    $$
    where  all selected elements $g_j^*\in\{g_k\}_{k=1}^K$ (by the induction hypothesis). So the
    remainder equals
    $$
    r_{l-1} = \sum_{k=1}^K c_k'g_k + w,
    $$
    where the coeffient $c_k'$ either equals $c_k$, or differs from it, and
    the latter case occurs only when we have selected element $g_j^*=g_k$ on some
    previous step. Denote $c^*:=\max|c_k'|$ and suppose that the maximum is
    attained at $|c'_{k_0}|=c^*$.

    Suppose that the current set of the coefficients contains an ``untouched'' large
    coefficient. Then $c^*>\hat\beta$. We prove that in this case we will choose
    some of the elements $\{g_k\}_{k=1}^K$ during the $l$-th step.
    Indeed, using the definition of $\mu^+$, we obtain:
    $$
    |\langle g_{k_0},r_{l-1}\rangle|
    \ge c^* - c^*\sum_{\substack{1\le k\le K\\ k\ne k_0}}|\langle g_{k_0},g_k\rangle| - |\langle g_{k_0},w\rangle|
    \ge c^*(1-\mu^+) - \theta.
    $$
    On the other hand, for any $g\not\in\{g_k\}_{k=1}^K$ we have (in the same
    way as in~\eqref{mu_plus_cor1}):
    $$
    |\langle g,r_{l-1}\rangle| \le c^*\sum_{k=1}^K |\langle g,g_k\rangle| + |\langle
    g,w\rangle| \le c^*(\mu + \mu^+) + \theta.
    $$
    Let us compare the lower and the upper bounds. If $c^*=\hat\beta$
    then the bounds become equal (see~\eqref{corr_equality}), so in our case
    $c^*>\hat\beta$ the first (lower) bound is greater than the second.
    Hence, all ``incorrect'' dictionary elements have smaller correlation and we
    will select only from ``true'' elements.

    Moreover, we have proved that if there is at least one untouched large
    coefficient then the stopping criteria is not satisfied~--- there is a
    correlation at least
    $$
    c^*(1-\mu^+)-\theta > \hat\beta(1-\mu^+) - \theta = \hat\theta,
    $$
    see~\eqref{corr_equality}. Hence, we will find all elements with large coeffients.

    If there are no large coefficients, then $c^*\le \hat\beta$ and our
    reasoning does not work. But if we suppose that the maximum of $|\langle
    g,r_l\rangle|$ is attained at an element $g\not\in\{g_k\}$,
    then the correlation is at most
    $$
    c^*(\mu+\mu^+)+\theta \le \hat\beta(\mu+\mu^+)+\theta= \hat\theta
    $$
    and our algorithm stops (and ignores the incorrect element).
\end{proof}

\begin{Corollary}
    If all the coefficients are large, i.e.
    $$
        \mu + 2\mu^+ + \frac{2\theta}{\min_k|c_k|} < 1,
    $$
    then OMP stops after $K$ steps and finds all
    $g_k$, $k=1,\ldots,K$.
\end{Corollary}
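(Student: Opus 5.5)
The Corollary follows from Theorem~\ref{thm_mu_plus} together with a counting argument. The hypothesis $\mu+2\mu^+ + 2\theta/\min_k|c_k|<1$ can be rewritten as $\min_k|c_k| > 2\theta/(1-\mu-2\mu^+) = \hat\beta$. So every coefficient $c_k$, $k=1,\dots,K$, is large in the sense of the proof of Theorem~\ref{thm_mu_plus}. Note that the condition also forces $\mu+2\mu^+<1$, so (\ref{mu_plus_condition}) holds and the theorem applies with the same $\theta$ and the stopping rule $\|r_l\|_\cD\le\hat\theta$. I take the $g_k$ to be distinct, so that the representation of $y$ is genuinely $K$-term.

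The steps are as follows.

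First, apply part (i) of Theorem~\ref{thm_mu_plus}: every selected element lies in $\{g_k\}_{k=1}^K$.

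Second, apply part (ii): since all $|c_k|>\hat\beta$, every $g_k$ is eventually selected.

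Third, use the fact that OMP never selects the same element twice. After $g$ is selected, the residual is orthogonal to $\mathcal{H}_l\ni g$, so $\langle r_l,g\rangle=0$. Since the stopping criterion was not met, some correlation is positive, and that element is preferred. Hence the selected elements are distinct, and by the first two steps exactly $K$ steps are performed before all $g_k$ have been chosen.

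Fourth, show that OMP stops after step $K$. After $K$ steps, $\mathcal{H}_K=\sp(g_1,\dots,g_K)$ contains $y-w$. Therefore $r_K = w - P_{\mathcal{H}_K}w$. One route is to observe that in the induction of the theorem's proof the residual has the form $\sum c'_k g_k + w$; once every $g_k$ has been touched, no untouched large coefficient remains. Then the last paragraph of that proof applies: either the maximal correlation is attained at a wrong element and is at most $\hat\theta$, so the algorithm stops, or it is attained at a true element. A true element has already been selected, so its correlation is zero, which makes the maximum zero and again triggers the stop.

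The main obstacle is the fourth step. The proof of Theorem~\ref{thm_mu_plus} only bounds wrong-element correlations when $c^*\le\hat\beta$, where $c^*=\max_k|c'_k|$ is the largest current coefficient, and the touched coefficients $c'_k$ after projection are not obviously bounded by $\hat\beta$. I would try first to avoid needing such a bound: all true elements have zero correlation with $r_K$, so only wrong elements remain. If the bound is nonetheless required, I would fall back on part (i), which already guarantees no wrong selection. Either the criterion then stops the algorithm, or a wrong element would be chosen, contradicting (i). Thus the algorithm must stop at step $K$ having found all $g_k$.
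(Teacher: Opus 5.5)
Your proposal is correct and follows the paper's route; the paper states the corollary as an immediate consequence of Theorem~\ref{thm_mu_plus}. Parts (i) and (ii), together with the fact that OMP never reselects an element, give exactly $K$ distinct true selections, and your fallback in the fourth step (a $(K+1)$-st selection would have to be a wrong element, contradicting (i)) closes the argument. Your worry about touched coefficients can also be settled directly: if $c^*=|c'_{k_0}|>\hat\beta$, then $|\langle g_{k_0},r\rangle|\ge c^*(1-\mu^+)-\theta>\hat\theta>0$, which is impossible for an already selected $g_{k_0}$ because the residual is orthogonal to it. Hence once all $g_k$ are selected, $c^*\le\hat\beta$, every wrong correlation is at most $c^*(\mu+\mu^+)+\theta\le\hat\theta$, and the stopping rule fires at step $K$ without invoking (i) as a black box.
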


\subsection{Sparse channel recovery problem in  wireless communication}
\label{sub_channel}
We very briefly describe the channel estimation problem and show what follows from Theorem \ref{thm_mu_plus}. The reader can read about wireless communications in \cite{App11}, \cite{HL18},\cite{XRS},\cite{Zh16}.   
In our setting  the unknown clean signal is  the frequency response of a sparse channel. It is given by the formula
$$
H_n = \sum_{k=1}^K \mathfrak{c}_k \exp(-2\pi i n\tau_k/T),
\quad n\in\Omega:=\{0,1,\ldots,N-1\},
$$
where the delays $\tau_k\in [0,T]$ and the coefficients $\mathfrak{c}_k\in\mathbb C$ are
unknown.

Suppose that a set of frequencies $\Psi=\{\psi_1,\ldots,\psi_m\}\subset\Omega$ is assigned to transmitter/receiver pair for channel estimation. The transmitter sends the pilots $P_n,n\in \Psi$ and the    receiver gets vector $\tilde{Y}=(\tilde{Y}_n)_{n\in \Psi}$ with  $\tilde{Y}_n=P_nH_n+\tilde{W}_n$  where $(H_n)$ is determined by the Physics of the communications and depends on the placement of transmitter and receiver in space and there is also some noise  $\tilde{W}=(\tilde{W}_n)$. The  pilot signals are known at the receiver's side and we deal with  their influence  by introducing new quantities $Y_n=P_n^{-1}\tilde{Y}_n,W_n=P_n^{-1}\tilde{W}_n$ and arriving at $Y_n=H_n+W_n.$   When pilots have constant magnitude (say $1$) the noise signal $W_n$   is a phase shifted version of $\tilde{W}_n.$ 
The goal is to recover $H=(H_n)_{n\in\Omega}$
using $Y$. This is a particular case of the problem considered
in~Subsection \ref{subsec_finite_dim}.

The delays may be approximated using the oversampled grid $\{jT/(\rho
N)\}$, $\rho>1$,
so $H$ becomes a sparse vector in the dictionary $\cD(F)$ that consists
of the $\ell_2$-normalized columns of the oversampled Fourier matrix
\begin{equation}
    \label{oversampled}
F_{n,j} = \exp\left(-2\pi i \frac{nj}{\rho N}\right),
\quad 0\le n<N,\;0\le j<\rho N.
\end{equation}
So, we have
$$
H = \sum_{k=1}^K c_k\Phi^{j_k},
\quad \Phi^j := N^{-1/2}F^j \in \cD(F).
$$

Note that due to oversampling we have to deal with a coherent dictionary. E.g. for $\rho=4$ we have $M(\cD(F))\approx 0.9$.
So, the technique of the previous sections is required here.

We define the sub-dictionary $\mathcal D_1$ as the set $\{\Phi^{j_k}:1\le k\le K\}$ of Fourier matrix columns used in the definition of
$H$.

If the delays are random and uniformly distributed on the delay grid then the
inequality
\begin{equation}
    \label{m_condition}
m = |\Psi| \ge C_\rho K^2 \log N
\end{equation}
allows us to estimate $M^+(\cD_1,\cD)$ and guarantees that the condition~\eqref{mu_plus_condition} is fulfilled with high
probability for a random $\Psi$.

Suppose that the noise $W$ satisfies the condition
\begin{equation}
    \label{noise_theta}
    \|W\|_{\cD(F_\Psi)} \le \theta.
\end{equation}
Then Theorem~\ref{thm_mu_plus} guarantees that the OMP will recover all elements
with $|c_k|>\hat\beta := C_1\theta$.

Recall the notions of the NMSE (normalized mean squared error)
and the SNR (signal-to-noise ratio):
$$
\mathrm{NMSE}(H,\hat H):=\frac{\|H-\hat H\|_2^2}{\|H\|_2^2},
\quad \mathrm{SNR}:=\frac{\|H\|_2^2}{\|W\|_2^2}. 
$$
Our considerations show that
\begin{equation}
    \mathrm{NMSE}(H,\hat H)\lesssim
    \mathrm{NMSE}(c,c^{\hat\beta}) + \frac{\|\pi W\|_2^2}{\|W\|_2^2}\mathrm{SNR}^{-1},
\end{equation}
where $c^{\beta}$ is the vector of the coefficients with $|c_k|\le\beta$
zeroed out and $\pi$ is the orthoprojection on the span of dictionary elements of $H$.

The condition~\eqref{noise_theta} is satisfied w.h.p. for a Gaussian noise.
Suppose that $W\sim\mathcal N_{\mathbb C}(0,\sigma^2\mathrm{Id}_{m})$. Then with
high probability we have
$$
\|W\|_{\cD(F_\Psi)}
\le \theta \asymp \sigma\sqrt{\log N}.
$$

Consider a simplified 3GPP   channel model (see \cite{3GPP38901}), when the delays are sampled uniformly
on the oversampled grid $\{jT/(\rho N)\}_{0\le j<\rho N}$ and the powers are decaying exponentially:
$$
    |\mathfrak{c}_k|^2 = \mathrm{const} \cdot \exp(-\alpha\tau_k/T)\cdot 10^{Z_k/10},
    \quad Z_k\sim\mathcal N(0,\zeta).
$$

We have the following corollary.
Suppose that the parameters $\alpha,\zeta,\rho$ and some $p,\delta\in(0,1)$ are fixed; the
numbers $m$, $N$, $K$ are sufficiently large. Then if the
condition~\eqref{m_condition} is satisfied and
$$
\mathrm{SNR}\ge C (K/m)\log N
$$
for sufficiently large $C$, then for a random $\Psi$
and a random channel the OMP algorithm with probability at least $1-p$ returns
an estimate such that
$\mathrm{NMSE}(H,\hat H) \le \delta$.

This technique may be also applied to the so-called  interference case, when
the ``noise'' has two components: the interference (structured)
noise $W_I$  and the thermal (Gaussian) noise $W_T$.

\subsection{A special setting}
\label{ss}

We now discuss the following setting. Let $\cD$ be a dictionary of a Hilbert space $\mathcal{H}$. 
Assume that this dictionary has a coherence parameter $M(\cD)\le \mu <1$. In addition, consider a subdictionary 
$\cD_1 \subset \cD$ with a much smaller coherence parameter $  M(\cD_1)\le \mu_1 <\mu$. 

{\bf Assumptions on $f$.} For given natural numbers $K\le L$, number $q\in (0,1)$, and numbers $0<a<b<\infty$ define 
the class $E(K,L,q,a,b)$ of elements satisfying the conditions
\be\label{FD1}
f\in \Sigma_L(\cD_1), \quad \text{which means}\quad f= \sum_{n=1}^L c_ng_n,\quad g_n \in \cD_1 
\ee
with $\{g_1,\dots,g_L\}$ being distinct elements;
\be\label{FD2}
aq^{n-1} \le |c_n| \le bq^{n-1}, \quad n=1,2,\dots, K;
\ee
\be\label{FD3}
  |c_n| \le bq^{n-1}, \quad n=K+1,\dots, L.
\ee

{\bf Assumptions on parameters.} Assume that our parameters satisfy the following inequalities
\be\label{FD4}
\mu \ge q(1-q) +\mu_1,
\ee
\be\label{FD5}
a(1-q) > b(\mu+q\mu_1),
\ee
\be\label{FD6}
\mu_1 \le (1-q)q^{L-1}.
\ee

\begin{Lemma}\label{FDL1} Assume that inequalities (\ref{FD4}) -- (\ref{FD6}) are satisfied. Then, for an element $f\in E(K,L,q,a,b)$ the PGA chooses the element $g_1$ at the first iteration and 
\be\label{FD7}
f_1/q \in E(K-1,L,q,a,b),\quad f_1 = f -\<f,g_1\> g_1 .
\ee
\end{Lemma}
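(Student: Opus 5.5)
The plan is to verify the two claims directly: first compare correlations to see that $g_1$ is the PGA choice, then compute $f_1$ explicitly and reorder its expansion. Throughout, write $S:=\sum_{n=1}^L |c_n|\le b\sum_{n\ge1}q^{n-1}\le b/(1-q)$, which follows from (\ref{FD2}) and (\ref{FD3}).

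\emph{Lower bound for $g_1$.} Since $\|g_n\|=1$ and $M(\cD_1)\le\mu_1$,
$$
|\<f,g_1\>|\ge |c_1|-\mu_1\sum_{n=2}^L|c_n|\ge a-\mu_1\frac{bq}{1-q}.
$$

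\emph{Upper bounds for every other $g\in\cD$.} There are two cases.
\begin{enumerate}[(a)]
\item If $g\notin\{g_1,\dots,g_L\}$, then only $M(\cD)\le\mu$ is available, giving $|\<f,g\>|\le \mu S\le b\mu/(1-q)$.
\item If $g=g_j$ with $j\ge2$, then $|\<f,g_j\>|\le |c_j|+\mu_1 S\le bq+b\mu_1/(1-q)$. By (\ref{FD4}) this is at most $b\mu/(1-q)$.
\end{enumerate}
So every $g\ne g_1$ satisfies $|\<f,g\>|\le b\mu/(1-q)$. Multiplying the required strict inequality $a-b\mu_1q/(1-q)>b\mu/(1-q)$ by $(1-q)$ gives exactly (\ref{FD5}). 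Hence $g_1$ is the unique maximizer of $|\<f,g\>|$, and the PGA selects $g_1$.

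\emph{Structure of $f_1$.} Write
$$
f_1=f-\<f,g_1\>g_1=\sum_{n=2}^L c_ng_n+d\,g_1,\qquad d:=c_1-\<f,g_1\>=-\sum_{n=2}^L c_n\<g_n,g_1\>.
$$
Then $|d|\le \mu_1 bq/(1-q)$. Dividing by $q$, relabel $g_n':=g_{n+1}$ and $c_n':=c_{n+1}/q$ for $n=1,\dots,L-1$, and put $g_L':=g_1$, $c_L':=d/q$. The elements $g_1',\dots,g_L'$ are distinct elements of $\cD_1$, so $f_1/q\in\Sigma_L(\cD_1)$.

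\emph{Coefficient bounds.} For $n\le K-1$, (\ref{FD2}) gives $aq^{n-1}\le|c_n'|\le bq^{n-1}$. For $K\le n\le L-1$, (\ref{FD3}) gives $|c_n'|\le bq^{n-1}$. For the new last coefficient, $|c_L'|\le \mu_1 b/(1-q)\le bq^{L-1}$ by (\ref{FD6}). Therefore $f_1/q\in E(K-1,L,q,a,b)$. When $K=1$ the class $E(0,L,q,a,b)$ imposes only the upper bounds, so the argument still applies.

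The main obstacle is the correlation comparison. Correlations with elements outside the support are controlled only by the large coherence $\mu$, so the argument must check that the competing support elements $g_j$, $j\ge 2$, never beat the bound $b\mu/(1-q)$; this is precisely what (\ref{FD4}) guarantees. The second subtle point is that $g_1$ is not removed from the expansion: it survives with the small coefficient $d$, and it is exactly (\ref{FD6}) that allows this coefficient to be absorbed as the $L$-th term of the class.
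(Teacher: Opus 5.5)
Your proof is correct and follows the paper's route. It uses the same lower bound $a-bq\mu_1/(1-q)$ for $|\langle f,g_1\rangle|$, the same two-case upper bound (off-support elements via $\mu$, support elements $g_j$, $j\ge 2$, via $\mu_1$ and (\ref{FD4})), and the same relabeling in which the leftover $g_1$-coefficient becomes the $L$-th term via (\ref{FD6}). You also spell out that the strict comparison is exactly (\ref{FD5}) and treat the case $K=1$, both of which the paper leaves implicit.
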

\begin{proof} First, we bound from below the number $|\<f,g_1\>|$. We have
\be\label{FD8}
|\<f,g_1\>| \ge |c_1| - \sum_{n=2}^L |c_n| \mu_1 \ge a - \frac{bq\mu_1}{1-q}.
\ee
Second, we bound from above the number $|\<f,g\>|$ for any $g\in\cD$ such that $g\neq g_1$. We begin with the case $g\neq g_n$, $n=2,\dots,L$. We have
\be\label{FD9}
|\<f,g\>| \le |c_1||\<g_1,g\>| +\mu \sum_{n=2}^L |c_n|  \le  \frac{b\mu}{1-q}.
\ee
In the case $g=g_j$, $j\in \{2,\dots,L\}$, we have
\be\label{FD10}
|\<f,g_j\>| \le |c_j|  +\mu_1 \sum_{1\le n\le L,n\neq j} |c_n|  \le qb + \frac{b\mu_1}{1-q}.
\ee
By assumption (\ref{FD4}) we obtain 
\be\label{FD11}
qb + \frac{b\mu_1}{1-q} \le \frac{b\mu}{1-q}.
\ee
Inequalities (\ref{FD8}) -- (\ref{FD11}) show that for any $g\in\cD$ such that $g\neq g_1$ we have 
$$
|\<f,g\>| < |\<f,g_1\>|,
$$
which guarantees that  the PGA chooses the element $g_1$ at the first iteration. 

We now check relation (\ref{FD7}). After the first iteration of the PGA we obtain
$$
f_1 = f -\<f,g_1\>g_1 = \sum_{n=2}^L c_ng_n + g_1c_L',\quad c_L' := -\sum_{n=2}^L c_n\<g_n,g_1\>.
$$ 
Denote 
$$
g_n' := g_{n+1}, \quad c_n' := c_{n+1}, \quad n=1,\dots,L-1, \quad g_L' := g_1. 
$$
Then 
$$
f_1 = \sum_{n=1}^L c_n'g_n'
$$
and 
$$
aq^{n-1} \le |c_n'/q| \le bq^{n-1}, \quad n=1,2,\dots, K-1,
$$
$$
  |c_n'/q| \le bq^{n-1}, \quad n=K,\dots, L-1.
$$
It remains to check the $c_L'/q$.  
$$
|c_L'| \le \sum_{n=2}^L |c_n||\<g_n,g_1\>| \le \mu_1\sum_{n=2}^L bq^{n-1} \le \frac{bq\mu_1}{1-q}.
$$
Therefore, we have for $\mu_1$ satisfying (\ref{FD6})
$$
|c_L'/q| \le bq^{L-1}. 
$$
Thus, we have proved that $f_1/q \in E(K-1,L,q,a,b)$.
Lemma \ref{FDL1} is proved. 

\end{proof}

Lemma \ref{FDL1} implies the following Theorem \ref{FDT2}.

\begin{Theorem}\label{FDT2} Assume that inequalities (\ref{FD4}) -- (\ref{FD6}) are satisfied. Then, for an element $f\in E(K,L,q,a,b)$ the PGA chooses the elements $g_1,g_2,\dots,g_K$ at the first $K$ iterations.
\end{Theorem}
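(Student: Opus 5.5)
The plan is to argue by induction on $K$, iterating Lemma \ref{FDL1}. We prove the statement simultaneously for all $K\le L$ with $L,q,a,b,\mu,\mu_1$ fixed. Then the hypotheses (\ref{FD4})--(\ref{FD6}) are the same at every step of the induction: they involve only $q,a,b,\mu,\mu_1,L$, not $K$.

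For $K=1$ the claim is exactly the first part of Lemma \ref{FDL1}: for $f\in E(1,L,q,a,b)$ the PGA picks $g_1$ at the first iteration.

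For the inductive step, take $f\in E(K,L,q,a,b)$ with $K\ge 2$. By Lemma \ref{FDL1} the first iteration selects $g_1$, and the residual satisfies $f_1/q\in E(K-1,L,q,a,b)$. The relabelled representation from the proof of the lemma is
$$
f_1/q=\sum_{n=1}^L (c_n'/q)\,g_n',\qquad g_n'=g_{n+1}\ (n\le L-1),\quad g_L'=g_1 .
$$
The first $K-1$ elements of this representation are therefore $g_2,\dots,g_K$.

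The remaining point is that the PGA commutes with scaling. Since $g(\lambda h)=g(h)$ for $\lambda\ne 0$ (the maximizer of $|\<\lambda h,g\>|$ is that of $|\<h,g\>|$), the PGA applied to $\lambda h$ selects the same elements as the PGA applied to $h$, and its residuals are $\lambda$ times the residuals for $h$. The iterations of the PGA on $f$ after the first one are exactly the PGA run on $f_1$, hence on $f_1/q$. By the induction hypothesis applied to $f_1/q$, the PGA chooses $g_1'=g_2,\dots,g_{K-1}'=g_K$ at its first $K-1$ iterations. So for $f$ it chooses $g_1,g_2,\dots,g_K$ at iterations $1,\dots,K$.

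The only delicate point is bookkeeping. The class $E(K-1,\cdot)$ must be applied with the relabelled system $\{g_n'\}$, which consists of $L$ distinct elements of $\cD_1$, so the lemma applies verbatim. If the maximizer $g(f)$ is not unique, note that Lemma \ref{FDL1} gives strict inequalities, so the choice is forced at each step and the statement holds for every realization of the PGA.
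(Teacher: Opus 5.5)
Your proof is correct and is the argument the paper intends when it says Lemma \ref{FDL1} implies Theorem \ref{FDT2}: you iterate the lemma on the rescaled residual $f_1/q$, using the homogeneity of the PGA and the relabelling $g_n'=g_{n+1}$, $g_L'=g_1$. Your extra details, namely that the lemma's strict inequalities force the choice at each step and that starting the induction at $K=1$ avoids ever needing $E(0,\dots)$, fill in points the paper leaves implicit.
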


Thus, we can apply the following combination of the PGA and the OMP. At the  first $K$ iterations apply the PGA and 
under assumptions of Theorem \ref{FDT2} we recover the elements $g_1,g_2,\dots,g_K$ with large coefficients. 
Then we make an orthogonal projection of $f$ onto $\sp(g_1,g_2,\dots,g_K)$.

\section{Recovery in Banach spaces. Discussion}
\label{Disc}

We begin with presenting some generalization to the case of a Banach space of some concepts introduced in Section \ref{In} in the case of a Hilbert space. For a nonzero element $g\in X$ we let $F_g$ denote a norming (peak) functional for $g$:
$$
\|F_g\|_{X^*} =1,\qquad F_g(g) =\|g\|_X,
$$
where $X^*$ is the dual to $X$ space.
The existence of such a functional is guaranteed by the Hahn-Banach theorem.  The norming functional $F_f$ is a linear functional (in other words is an element of the dual to $X$ space $X^*$), which can be explicitly written in some cases. In a real Hilbert space $F_f$ can be identified with $f\|f\|^{-1}$. In the real $L_p$, $1<p<\infty$, it can be identified with $f|f|^{p-2}\|f\|_p^{1-p}$. Similar expressions are known in the complex spaces as well.  We introduce a new norm, associated with a dictionary $\cD$,  by the formula
$$
\|f\|_\cD:=\sup_{g\in\cD}\sup_{F_g}|F_g(f)|,\quad f\in X.
$$
We note that, in general, a norming functional $F_g$ is not unique. This is why we take $\sup_{F_g}$ over all norming functionals of $g$ in the definition of $\|f\|_\cD$. We do not need $\sup_{F_g}$ in that definition   if for each 
$g\in\cD$ there is a unique norming functional $F_g\in X^*$.   It is known that the uniqueness of the norming functional $F_g$ is equivalent to the property that $g$ is a point of Gateaux smoothness:
 $$
 \lim_{u\to 0}(\|g+uy\|+\|g-uy\|-2\|g\|)/u =0
 $$
 for any $y\in X$. 
 For a Banach space $X$ we define the modulus of smoothness
$$
\rho(u) :=\rho(u,X) := \sup_{\|x\|=\|y\|=1}\left(\frac{1}{2}(\|x+uy\|+\|x-uy\|)-1\right).
$$
The uniformly smooth Banach space is the one with the property
$$
\lim_{u\to 0}\rho(u)/u =0.
$$
In particular, if $X$ is uniformly smooth  then $F_f$ is unique for any $f\neq 0$. 
It turns out that the norm $\|\cdot\|_\cD$ is useful in measuring the noise by imposing the assumption
\be\label{In5}
\|f-f^*\|_\cD \le \epsilon. 
\ee

{\bf Comment on terminology.} We now make a comment on terminology. In the greedy approximation literature we define a dictionary $\cD$ as a system $\{g\}$  of elements $g\in X$ with the following two properties
$$
\|g\|\le 1 \quad\text{for all} \quad g\in \cD \quad \text{and the closure of}\, \sp(\cD) =X.
$$
The normalization condition $\|g\|\le 1$ is imposed for convenience. Clearly, the characteristic $\sigma_m(f,\cD)$ does not depend on normalization. In this paper we mostly use this characteristic. In the case $\|g\|\le B$ for all $g\in \cD$ with some positive constant $B$ we can consider the new system $\cD_B := \{g/B: g\in \cD\}$. Taking into account the fact that the WOMP is homogeneous with respect to $f$ we see that, for instance, Theorem \ref{ssrT1} holds for the dictionary $\cD$ satisfying the condition $\|g\|\le B$. 

Let us discuss the second condition. Suppose that a system $\cS\subset X$ does not satisfy this condition. Then, instead of the Banach space $X$ we consider a subspace $X_\cS$ of $X$, which is the closure (in $X$) of $\sp(\cS)$. This makes the system $\cS$ to be a dictionary in the Banach space $X_\cS$. For this reason, we sometimes with a little abuse of exactness freely use both terms {\it system} and {\it dictionary} for a general system. In the greedy approximation theory there are theorems, which guarantee convergence of certain greedy algorithms with respect to any dictionary $\cD$ for any element $f\in X$. Clearly, in the case, when we deal with a system, we can only apply those theorems to $f\in X_\cS$. 
Also, in many cases, for instance in the case of Theorem  \ref{ssrT1} the proof, which is presented for a dictionary, works for
 any system satisfying the normalization condition.

{\bf General setting. Assumptions on the dictionary. Banach spaces.} In the majority of cases we need to impose restrictions on the dictionary in order to be able to prove theoretical results. We present some standard assumptions on the dictionary. We begin with 
the concept of coherency and give its definition in a general uniformly smooth Banach space. Let $\cD$ be a dictionary in a Banach space $X$. We define the coherence parameter of this dictionary in the following way
$$
M(\cD):= \sup_{g\neq h;g,h\in\cD} |F_g(h)|.
$$
 
\subsection{Fast decaying coefficients} 
\label{s5.1}
  Let $\cD$ be an arbitrary dictionary in a Banach space  $X$. 

\begin{Theorem}\label{FDT1} Assume that for the modulus of smoothness of $X$ we have $\rho(u,X) \le \gamma u^2$. Then for any element $f\in X$, which has an expansion  
$$
f= \sum_{k=0}^\infty a_k g_k,\quad g_k\in \cD,\quad |a_k| \le q^k,\quad k=0,1,\dots,\quad q\in (0,1),
$$
we have
$$
\sigma_m(f,\cD) \le C(\gamma) q^m (1-q)^{-1/2}.
$$
\end{Theorem}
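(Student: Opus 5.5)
The bound $q^m(1-q)^{-1/2}$ suggests approximating the tail $T_m:=\sum_{k\ge m}a_kg_k$ by a short random combination rather than by truncation. Truncation alone costs $\sum_{k\ge m}q^k = q^m/(1-q)$, which loses a full factor $(1-q)^{-1/2}$. In a space with $\rho(u)\le\gamma u^2$ (type $2$ behaviour) sums of independent mean-zero terms behave like $\ell_2$ sums, and $\bigl(\sum_{k\ge m}q^{2k}\bigr)^{1/2}\asymp q^m(1-q)^{-1/2}$. So I will keep the head $\sum_{k<m'}a_kg_k$ exactly, for some $m'\le m$, and spend the remaining $m-m'$ terms approximating the tail with $\ell_2$-type error.

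\textbf{Step 1 (type-2 / Maurey lemma).} From $\rho(u)\le\gamma u^2$ I derive that for independent mean-zero $X$-valued random vectors $\xi_1,\dots,\xi_n$,
$$\mathbb{E}\Bigl\|\sum_{i=1}^n\xi_i\Bigr\|^2\le C(\gamma)\sum_{i=1}^n\mathbb{E}\|\xi_i\|^2 .$$
I prove this by induction on $n$, using the standard consequence of the smoothness assumption
$$\|x+y\|^2\le\|x\|^2+2F_x(y)\|x\|+C\gamma\|y\|^2 .$$
Conditioning on the first $n-1$ terms, the linear term has zero expectation. I expect this step to be the main technical obstacle; if needed I will cite the known Maurey-type lemma in uniformly smooth spaces from \cite{VTbook}.

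\textbf{Step 2 (empirical approximation of a block).} Take a finite block $B=\sum_{k\in I}a_kg_k$ with $S:=\sum_{k\in I}|a_k|$. Draw $n$ i.i.d.\ indices with $P(k)=|a_k|/S$ and set $\xi_i=\frac{S}{n}\operatorname{sign}(a_k)g_k-\frac{B}{n}$, i.e.\ the Maurey estimator. Step 1 then gives an $n$-term element $B_n$ with
$$\|B-B_n\|\le C(\gamma)^{1/2}\,S\,n^{-1/2}.$$

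\textbf{Step 3 (dyadic block decomposition).} To avoid losing a $(1-q)^{-1}$ factor from $S$, I apply Step 2 block by block. I split the indices $k\ge m/2$ into blocks $I_j=[m/2+jr,\,m/2+(j+1)r)$ with $r\asymp(1-q)^{-1}$, so that within a block the coefficients are comparable to $q^{m/2+jr}$. Then $S_j\lesssim r\,q^{m/2+jr}$, and I give block $j$ a budget $n_j$ terms.
\begin{itemize}
\item If $m(1-q)\lesssim 1$, the claim reduces to $\sigma_m\le\|f\|\lesssim(1-q)^{-1/2}$, which is immediate from Step 1 with random signs applied to $f$ itself; here the random-sign (Rademacher) version of Step 1 is what is used.
\item Otherwise, I use the head of length $m/2$ exactly. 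For the tail, choosing $n_j\asymp r\,2^{-j}$ (or $n_j=r$ for the first few blocks, and dropping blocks once $q^{jr}$ is tiny) gives error
$$\sum_j r\,q^{m/2+jr}n_j^{-1/2}.$$
\end{itemize}

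This bookkeeping naively yields $q^{m/2}(1-q)^{-1/2}$ rather than $q^m(1-q)^{-1/2}$. The fix is to keep the exact head of length $m-\lceil c\,r\rceil$ and approximate only the remaining window plus the tail with about $c\,r$ terms. The window has size $\asymp(1-q)^{-1}$ with coefficients $\asymp q^m$. Its Rademacher/Maurey error is $q^m\sqrt{r}$, and the tail beyond it contributes a geometric series with the same bound, since $q^{r}\le e^{-1}$. Balancing the budgets so that the total is at most $m$ completes the estimate. The delicate point is choosing the window and budgets so that the constants depend only on $\gamma$.
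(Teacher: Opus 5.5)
For $m(1-q)\gtrsim 1$, your final argument is the paper's proof. Keep $\sum_{k<m-m_1}a_kg_k$ exactly, and approximate the entire remainder $f'=\sum_{k\ge m-m_1}a_kg_k$, whose $\ell_1$-mass is at most $q^{m-m_1}(1-q)^{-1}$, by $m_1\asymp 1/\ln(1/q)$ terms. The error is $C(\gamma)m_1^{-1/2}q^{m-m_1}(1-q)^{-1}\lesssim q^m(1-q)^{-1/2}$. The paper cites the rate $\sigma_{m_1}(f')\le C(\gamma)m_1^{-1/2}\|f'\|_{A_1(\cD)}$ from Temlyakov's book; your Steps 1--2 re-derive it correctly by Maurey's method. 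The dyadic blocks and the window/tail split are unnecessary: apply Step 2 to all of $f'$ at once. Note that discarding the part beyond a window of length $r$ would cost $q^{m+r}(1-q)^{-1}\approx e^{-1}q^m r$, which is too large by a factor $\sqrt r$.

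The genuine gap is your first bullet. Random signs give $\mathbb{E}_\varepsilon\|\sum_k\varepsilon_ka_kg_k\|\lesssim(\sum_k|a_k|^2)^{1/2}$. That is an average over sign patterns and says nothing about the given sum, whose phases are fixed and whose $g_k$ may be nearly aligned. The same objection applies to calling the window error ``Rademacher''; only the Maurey estimate is valid there. No other argument can repair this, because the inequality is false in that regime.

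In $\ell_2$ with orthonormal $e_0,e_1,\dots$, let $\cD=\{g_j=(e_0+e_j)/\sqrt2:\ j\ge1\}$; its span is dense since $\frac1n\sum_{j\le n}(e_0+e_j)\to e_0$. Let $f=\sum_{k\ge0}q^kg_{k+1}$. Take $h=\sum_{i=1}^m c_ig_{j_i}$ with distinct $j_i$. Keep only the coordinates $e_0$ and $e_{j_i}$, and write $S=(1-q)^{-1}$, $A=\sum_ic_i$, $B=\sum_iq^{j_i-1}\le m$. Cauchy--Schwarz gives
$$
2\|f-h\|^2\ge|S-A|^2+\frac{|B-A|^2}{m}\ge\frac{(S-B)^2}{m+1},
$$
hence $\sigma_m(f)\ge\bigl((1-q)^{-1}-m\bigr)/\sqrt{2(m+1)}$. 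For fixed $m$ and $q\to1$ this is of order $(1-q)^{-1}$, whereas $C(\gamma)q^m(1-q)^{-1/2}$ is of order $(1-q)^{-1/2}$.

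So the statement holds only for $m\gtrsim 1/\ln(1/q)$, or for $q\le 1/2$, where the trivial bound $q^m(1-q)^{-1}\le\sqrt2\,q^m(1-q)^{-1/2}$ already suffices. The paper's choice $m_1\asymp 1/\ln(1/q)$ tacitly needs $m_1\le m$ as well. For smaller $m$ the correct estimate is $\sigma_m(f)\le C(\gamma)m^{-1/2}(1-q)^{-1}$, from Step 2 applied to all of $f$, and the example above shows this is sharp.
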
 
\begin{proof} We begin with a trivial argument. We have
$$
\sigma_m(f,\cD) \le \left\| \sum_{k=m}^\infty a_k g_k\right\| \le \sum_{k=m}^\infty |a_k| \le q^m (1-q)^{-1}.
$$
We now show how to improve $(1-q)^{-1}$ to the $(1-q)^{-1/2}$. Represent a given $m\in \N$, $m\ge 2$, in the form
$m=m-m_1 +m_1$, where $m_1\in \N$ will be chosen later. We make our approximation in two steps. At the first step we use the $(m-m_1)$-term approximant 
$$
G_{m-m_1}(f) := \sum_{k=0}^{m-m_1-1} a_k g_k\quad\text{and obtain}\quad f':=f-G_{m-m_1}(f) = \sum_{k=m-m_1}^\infty a_k g_k.
$$
Then (see the definition of the $\|\cdot\|_{A_1(\cD)}$ in Subsection \ref{s5.2})
$$
\|f'\|_{A_1(\cD)} \le \sum_{k=m-m_1}^\infty |a_k| \le q^{m-m_1}(1-q)^{-1}.
$$
At the second step we use a known result (see, for instance, \cite{VTbook}, Theorem 6.8, p.342) and obtain
$$
\sigma_m(f,\cD) \le \sigma_{m_1}(f',\cD) \le C(\gamma)(m_1)^{-1/2}\|f'\|_{A_1(\cD)}.
$$
Choosing $m_1$ of the order of $1/\ln(1/q)$ we complete the proof of Theorem~\ref{FDT1}.

\end{proof}

\begin{Remark}\label{FDR1} The upper bound in Theorem \ref{FDT1} is sharp. Indeed, let $\mathcal{H}$ be a Hilbert space with 
an orthonormal basis $\{e_k\}_{k=0}^\infty$ as a dictionary $\cD$. Take 
$$
f= \sum_{k=0}^\infty q^k e_k.
$$
Then, obviously,
$$
\sigma_m(f,\cD) = \left(\sum_{k=m}^\infty q^{2k}\right)^{1/2} = q^m(1-q)^{-1/2}(1+q)^{-1/2}.
$$
\end{Remark}

\subsection{Sparsity assumption A3}
\label{s5.2}
  
 We now present another setting for the study of noisy data. We present some comments in the general setting of greedy approximation in Banach spaces. We only formulate some results for the reader to get a feeling of them and refer the reader for the details to \cite{VT211}, p.48, section Stability.  
Usually, in the greedy algorithms literature the noisy data is understood in the following deterministic way. For a real Banach space $X$ and a dictionary $\cD\subset X$ define $A_1(\cD)$ to be the closure (in $X$) of the convex hull of the symmetrized dictionary $\cD^\pm := \{\pm g : g\in \cD\}$. For each $f\in X$ we associate the following norm
$$
\|f\|_{A_1(\cD)} := \inf \{M>0:\, f/M\in A_1(\cD)\}.
$$

Take a number $\e\ge 0$ and two elements $f$, $f^*$ from a Banach space $X$ such that
\be\label{S1}
\|f-f^*\| \le \e,\quad
f^*/B \in A_1(\cD),
\ee
with some number $B>0$.
Then we interpret $f$ as a noisy version of our original signal $f^*$, for which we know that it has some good properties formulated in terms of $A_1(\cD)$. The first results on approximation of noisy data (in the sense of (\ref{S1})) were obtained in \cite{VT115} for the Weak Chebyshev Greedy Algorithm (WCGA) and the Weak Greedy Algorithm with Free Relaxation (WGAFR). The WCGA is the generalization to the case of Banach spaces of the Weak Orthogonal Matching Pursuit (WOMP) defined in Hilbert spaces (see above). 
Later, in \cite{VT165} we proved Theorem \ref{ST1}, which 
covers all algorithms from the collection Weak Biorthogonal Greedy Algorithms (WBGA($\tau$)). Thus, Theorem \ref{ST1} covers the known results from  \cite{VT115} on WCGA and WGAFR and also covers the corresponding results on some other greedy algorithms.

\begin{Theorem}[{\cite{VT165}}]\label{ST1} Let $X$ be a uniformly smooth Banach space with modulus of smoothness $\rho(u)\le \gamma u^q$, $1<q\le 2$. Assume that $f$ and $f^*$ satisfy (\ref{S1}).
Then, for any algorithm from the collection  WBGA($\tau$ with the weakness sequence $\tau =\{t_k\}_{k=1}^\infty$, applied to $f$ we have
$$
\|f_m\| \le  \max\left\{2\e,\, C(q,\gamma)(B+\e) \Big(1+\sum_{k=1}^mt_k^p\Big)^{-1/p}\right\},
\quad p:=\frac{q}{q-1},
$$
with $C(q,\gamma)= 4(2\gamma)^{1/q}$.
\end{Theorem}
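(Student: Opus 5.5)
The plan is the standard argument for WBGA($\tau$) convergence on $A_1(\cD)$, with the noise entering only through one correlation estimate. Recall the two defining features of WBGA($\tau$) from \cite{VT165}. First, the greedy selection: $F_{f_{m-1}}(\varphi_m)\ge t_m\sup_{g\in\cD^\pm}F_{f_{m-1}}(g)$. Second, biorthogonality together with error reduction: $F_{f_m}(G_m)=0$ and $\|f_m\|\le\inf_{\lambda}\|f_{m-1}-\lambda\varphi_m\|$. Write $a_m:=\|f_m\|$. The target is a one-step recursion for $a_m$ as long as $a_{m-1}\ge 2\e$. Since $\|f_m\|\le\|f_{m-1}\|$, the sequence $a_m$ is nonincreasing, so once $a_k<2\e$ the bound $\|f_m\|\le 2\e$ holds for all $m\ge k$.

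\textbf{Step 1 (noisy lower bound for the greedy correlation).} By biorthogonality,
$$
\|f_{m-1}\|=F_{f_{m-1}}(f_{m-1})=F_{f_{m-1}}(f).
$$
Splitting $f=f^*+(f-f^*)$ and using $f^*/B\in A_1(\cD)$ together with $\|F_{f_{m-1}}\|=1$ gives
$$
\|f_{m-1}\|\le B\sup_{g\in\cD^\pm}F_{f_{m-1}}(g)+\e .
$$
Hence, whenever $a_{m-1}\ge2\e$, we get $\sup_g F_{f_{m-1}}(g)\ge a_{m-1}/(2B)$, and therefore $F_{f_{m-1}}(\varphi_m)\ge t_m a_{m-1}/(2B)$.

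\textbf{Step 2 (one-step decrease via smoothness).} Use the standard inequality, valid for $\|\varphi\|\le1$ and $\lambda\ge0$:
$$
\|x-\lambda\varphi\|\le\|x\|\bigl(1-\lambda F_x(\varphi)/\|x\|+2\rho(\lambda/\|x\|)\bigr).
$$
Combining it with $\rho(u)\le\gamma u^q$ and Step 1 gives
$$
a_m\le a_{m-1}\Bigl(1-\frac{\lambda t_m}{2B}+2\gamma\frac{\lambda^q}{a_{m-1}^q}\Bigr)\quad\text{for all }\lambda\ge0.
$$
Optimizing in $\lambda$ (take $\lambda$ proportional to $(t_m a_{m-1}^q/B)^{1/(q-1)}$) yields
$$
a_m\le a_{m-1}\Bigl(1-c(q,\gamma)\,t_m^p\,(a_{m-1}/B)^p\Bigr).
$$
The constant is trackable to the form $C(q,\gamma)=4(2\gamma)^{1/q}$.

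\textbf{Step 3 (solving the recursion).} Apply the standard numerical lemma (as in \cite{VTbook}): if $a_0\le A$ and $a_m\le a_{m-1}(1-t_m^p a_{m-1}^p/A^p)$, then
$$
a_m\le A\Bigl(1+\sum_{k=1}^m t_k^p\Bigr)^{-1/p}.
$$
Here $a_0=\|f\|\le\|f^*\|+\e\le B+\e$, since $\|f^*\|\le B$. So we take $A=C(q,\gamma)(B+\e)$, using $B\le B+\e$ in the recursion factor. The recursion is valid at every step with $a_{m-1}\ge2\e$. If some earlier $a_k<2\e$, monotonicity gives $a_m<2\e$. This produces the maximum of the two expressions in the statement.

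\textbf{Main obstacle.} The main obstacle is Step 2 with correct constant bookkeeping: one must make the optimal $\lambda$ admissible and keep the recursion in exactly the form the numerical lemma requires. Step 1 is where the noise enters and is the only new ingredient compared with the noiseless proof.
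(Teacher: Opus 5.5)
Your proposal is correct and is essentially the argument of \cite{VT165}, which this paper only cites. You use biorthogonality plus the split $f=f^*+(f-f^*)$ to get $\sup_{g}F_{f_{m-1}}(g)\ge (\|f_{m-1}\|-\e)/B\ge \|f_{m-1}\|/(2B)$ when $\|f_{m-1}\|\ge 2\e$. The smoothness inequality with $\lambda$ chosen so that $2\gamma(\lambda/a_{m-1})^q=\lambda t_m/(4B)$ then gives the recursion with exactly $C(q,\gamma)=4(2\gamma)^{1/q}$, and the standard numerical lemma on sequences finishes. The one point you leave implicit is that running the lemma from $a_0\le B+\e$ with $A=C(q,\gamma)(B+\e)$ needs $C(q,\gamma)\ge 1$. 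This holds: taking $y=x$ in the definition of $\rho$ gives $\rho(2)\ge 1$, hence $\gamma\ge 2^{-q}$ and $C(q,\gamma)\ge 4\cdot 2^{1/q-1}\ge 2\sqrt{2}$.
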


The corresponding version of Theorem \ref{ST1} (for the OMP) for Hilbert spaces reads as follows.

\begin{Theorem}\label{DiT1} Let $\mathcal{H}$ be a  Hilbert space.   Assume that $f$ and $f^*$ satisfy (\ref{S1}).
Then, for the OMP applied to $f$ we have
$$
\|f_m\| \le  \max\left\{2\e,\,  4(B+\e) (1+ m)^{-1/2}\right\}.
$$
\end{Theorem}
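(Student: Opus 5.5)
Theorem \ref{DiT1} is the special case of Theorem \ref{ST1} with $X=\mathcal{H}$ and $t_k=1$ for all $k$. The OMP is exactly the WCGA in a Hilbert space with $t_k\equiv1$, and the WCGA belongs to the collection WBGA($\tau$). So the plan is to check the parameters and then read off the constants.

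First, in a Hilbert space the parallelogram law gives
$$
\frac12\bigl(\|x+uy\|+\|x-uy\|\bigr)\le \Bigl(\tfrac12\bigl(\|x+uy\|^2+\|x-uy\|^2\bigr)\Bigr)^{1/2}=(1+u^2)^{1/2}\le 1+\tfrac{u^2}{2}
$$
for $\|x\|=\|y\|=1$. Hence $\rho(u,\mathcal{H})\le u^2/2$, so we may take $q=2$ and $\gamma=1/2$. Then $p=q/(q-1)=2$, and
$$
C(q,\gamma)=4(2\gamma)^{1/q}=4\cdot 1^{1/2}=4 .
$$
With $t_k=1$ we have $\sum_{k=1}^m t_k^p=m$. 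Substituting into Theorem \ref{ST1} gives
$$
\|f_m\|\le\max\bigl\{2\e,\,4(B+\e)(1+m)^{-1/2}\bigr\},
$$
which is the claim.

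The only point needing care is verifying that the OMP fits the WBGA framework of \cite{VT165}. In a real Hilbert space the norming functional of $f_{m-1}$ is $F_{f_{m-1}}=f_{m-1}/\|f_{m-1}\|$. The greedy step of the WCGA, which maximizes $F_{f_{m-1}}(g)$ over $g\in\cD^\pm$, therefore coincides with maximizing $|\<f_{m-1},g\>|$. The Chebyshev step, the best approximation from $\sp(\psi_1,\dots,\psi_m)$, is the orthogonal projection. So the OMP iterates are exactly the WCGA iterates, and the WCGA satisfies the biorthogonality property $F_{f_m}(G_m)=0$ required for membership in WBGA.

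If one prefers to avoid citing the WBGA framework, there is a self-contained fallback. Write $f^*=\sum c_i g_i$ with $\sum|c_i|\le B$ (up to closure), and consider a step with $\|f_{m-1}\|>2\e$. Since $f_{m-1}\perp G_{m-1}$,
$$
\|f_{m-1}\|^2=\<f_{m-1},f\>=\<f_{m-1},f^*\>+\<f_{m-1},f-f^*\>\le B\sup_g|\<f_{m-1},g\>|+\e\|f_{m-1}\|.
$$
It follows that $\sup_g|\<f_{m-1},g\>|\ge \|f_{m-1}\|^2/(2B)$. The projection step then yields
$$
\|f_m\|^2\le\|f_{m-1}\|^2\bigl(1-\|f_{m-1}\|^2/(4B^2)\bigr),
$$
and the standard sequence lemma $a_m\le a_{m-1}(1-a_{m-1}/A)\Rightarrow a_m\le A/(m+1)$ finishes the argument, with constant bookkeeping as the only nuisance. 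Since the steps where $\|f_{m-1}\|\le 2\e$ are absorbed by the max together with the monotonicity of $\|f_m\|$, I expect no real obstacle here.
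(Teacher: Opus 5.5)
Your proposal is correct and is exactly how the paper obtains this statement. The paper gives no separate proof; it presents the result as the specialization of Theorem~\ref{ST1} to a Hilbert space with $t_k\equiv 1$, where $\rho(u)\le u^2/2$ gives $q=p=2$, $\gamma=1/2$, $C(q,\gamma)=4$, and OMP is the WCGA, a member of WBGA($\tau$). Your self-contained fallback is also sound: once you note that $\|f_0\|>2\varepsilon$ forces $\|f_0\|<2B$, the sequence lemma applies with $A=4B^2$, and the argument even yields the slightly sharper bound $\max\{2\varepsilon,\,2B(1+m)^{-1/2}\}$.
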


 \subsection{Performance of the WQOGA}
 \label{QO}

 It is well known that the Lebesgue-type inequalities guarantee exact recovery of sparse elements. 
In this section we discuss greedy type algorithms, which are based on other ideas than dual greedy and $X$-greedy algorithms are based on, but provide good results for exact recovery and Lebesgue-type inequalities for special dictionaries. Results of this section are based on the paper \cite{ST}, which is a follow up to the dissertation \cite{S}. 
We study here the following greedy-type algorithm.

 {\bf Weak Quasi-Orthogonal Greedy Algorithm (WQOGA).} 
 Let $t\in (0,1]$. Denote $f_0:=f_0^{q,t}:=f$ (here and below index $q$ stands for {\it quasi-orthogonal}) and find $\varphi_1:=\varphi_1^{q,t}\in\cD$ such that
 $$
 |F_{\varphi_1}(f_0)| \ge t\sup_{g\in \cD}|F_g(f_0)|.
 $$
 Next, we find $c_1$ satisfying
 $$
 F_{\varphi_1}(f-c_1\varphi_1)=0.
 $$ 
 Denote $f_1:=f_1^{q,t}:=f-c_1\varphi_1$. 
 
 We continue this construction in an inductive way. Assume that we have already constructed residuals $f_0,f_1,\dots,f_{m-1}$ and dictionary elements $\varphi_1,\dots,\varphi_{m-1}$. Now, we pick an element
 $\varphi_m:=\varphi_m^{q,t}\in\cD$ such that   
 $$
 |F_{\varphi_m}(f_{m-1})| \ge t\sup_{g\in \cD}|F_g(f_{m-1})|.
 $$
 Next, we look for $c_1^m,\dots,c_m^m$ satisfying
 \begin{equation}\label{QO9.1}
 F_{\varphi_j}(f-\sum_{i=1}^mc_i^m\varphi_i)=0,\quad j=1,\dots,m. 
 \end{equation}
If there is no solution to (\ref{QO9.1}) then we stop, otherwise we denote $G_m:=G_m^{q,t} := \sum_{i=1}^mc_i^m\varphi_i$ and $f_m:=f_m^{q,t}:=f-G_m $ with $c_1^m,\dots,c_m^m$ satisfying (\ref{QO9.1}).

\begin{Remark} We note that (\ref{QO9.1}) has a unique solution if 
$$
\det [F_{\varphi_j}(\varphi_i)]_{i,j=1}^m \neq 0.
$$
 We apply the WQOGA in the case of a dictionary with the coherence parameter $M:=M(\cD)$. Then, by a simple well known argument on the linear independence of the rows of the matrix $[F_{\varphi_j}(\varphi_i)]_{i,j=1}^m$, we conclude that (\ref{QO9.1}) has a unique solution for any $m<1+1/M$.   Thus, in the case of an $M$-coherent 
dictionary $\cD$, we can run the WQOGA for at least $[1/M]$ iterations.
\end{Remark} 

In the case $t=1$ we call the WQOGA the Quasi-Orthogonal Greedy Algorithm (QOGA). In the case of QOGA we need to make an extra assumption that the corresponding maximizer $\ff_m\in\cD$ exists. Clearly, it is the case when $\cD$ is finite.  

It was proved in \cite{VT107}  (see also \cite{VTbook}, p.382) that the WQOGA is as good as the WOGA in the sense of exact recovery of sparse signals with respect to incoherent dictionaries. The following result was obtained in \cite{VT107} (see Theorem 11.14 there).
 \begin{Theorem}[{\cite{VT107}}]\label{QOT1} Let $t\in (0,1]$. Assume that $\cD$ has coherence parameter $M$. Let $S<\frac{t}{1+t}(1+1/M)$. Then for any
 $f$ of the form
 \begin{equation}\label{QO1.1}
 f=\sum_{i=1}^Sa_i\psi_i,
 \end{equation}
 where $\psi_i$ are distinct elements of $\cD$, the WQOGA recovers it exactly after $S$ iterations. In other words we have that $f^{q,t}_S=0$.
 \end{Theorem}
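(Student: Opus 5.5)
We argue by induction on the iteration number $k=0,1,\dots,S-1$. The inductive claim is that the residual $f_k$ lies in the span of $\psi_1,\dots,\psi_S$, and that the elements $\varphi_1,\dots,\varphi_k$ chosen so far are distinct members of $\{\psi_i\}_{i=1}^S$. Once this is shown for all $k<S$, the chosen elements $\varphi_1,\dots,\varphi_S$ are a permutation of $\psi_1,\dots,\psi_S$. Since $S<1+1/M$ (because $t/(1+t)\le 1/2$), the Remark above gives that system (\ref{QO9.1}) has a unique solution at step $S$. The exact coefficients $a_i$ solve that system, since $f-\sum a_i\psi_i=0$. Hence $f_S=0$.

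Key estimate. Write the residual as $f_{k}=\sum_{i=1}^S b_i\psi_i$ and let $\Lambda:=\{\varphi_1,\dots,\varphi_k\}$. The quasi-orthogonality conditions (\ref{QO9.1}) say that $F_{\psi}(f_k)=0$ for every $\psi\in\Lambda$.

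Let $i_0$ attain $\max_i|b_i|=:b^*>0$; if $b^*=0$ we are already done. Its index lies outside $\Lambda$. Indeed, if $\psi_{i_0}\in\Lambda$, then
$$0=|F_{\psi_{i_0}}(f_k)|\ge b^*-M(S-1)b^*,$$
which is positive because $S<1+1/M$, a contradiction.

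For this $i_0$ we obtain the lower bound
$$|F_{\psi_{i_0}}(f_k)|\ge b^*\bigl(1-M(S-1)\bigr).$$
For any $g\in\cD\setminus\{\psi_i\}$ we obtain the upper bound
$$|F_g(f_k)|\le MSb^*.$$
The weak greedy step picks an element $\varphi_{k+1}$ with correlation at least $t\,b^*(1-M(S-1))$. This exceeds $MSb^*$ exactly when $t(1-MS+M)>MS$, that is, when $S<\frac{t}{1+t}(1+1/M)$. So the new element is some $\psi_i$. It is not in $\Lambda$, because elements of $\Lambda$ have zero correlation with $f_k$ while the maximal correlation is positive. Finally, $f_{k+1}=f-G_{k+1}$ is again in the span of the $\psi_i$, since both $f$ and $G_{k+1}$ are.

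Main obstacle. The delicate point is the base of the estimate: checking that $|F_g(h)|\le M$ for $g\ne h$ combines with linearity of $F_g$ and $F_g(g)=\|g\|=1$ in the way used above. This requires assuming normalized dictionary elements; otherwise one rescales, as in the comment on terminology. A second point needs care. After step $k$ we must know that (\ref{QO9.1}) was solvable, so that the algorithm did not stop early. This again follows from the Remark, because $k\le S<1+1/M$.
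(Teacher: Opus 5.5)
Your argument is correct and is essentially the standard proof from \cite{VT107}; the paper itself only quotes the result without proof. The three ingredients match: the bound $|F_{\psi_{i_0}}(f_k)|\ge b^*(1-M(S-1))$ against $|F_g(f_k)|\le MSb^*$ off the support forces a true element; the conditions $F_{\varphi_j}(f_k)=0$ rule out repeats; and diagonal dominance of $[F_{\varphi_j}(\varphi_i)]$ gives unique solvability, hence $f_S=0$. One point to tighten: take $\|g\|=1$ for $g\in\cD$ as a standing hypothesis, as the source does, rather than obtaining it by rescaling, because rescaling dictionary elements changes $M$.
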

It is known (see, for instance, \cite{VTbook}, pp.303--305) that the bound $S<\frac{1}{2}(1+1/M)$ is sharp for exact recovery by the OGA. 

 As above,  we define the best $m$-term approximation in the norm $Y$ as follows
$$
\sigma_m(f)_Y := \inf_{g\in\Sigma_m(\cD)}\|f-g\|_Y.
$$
In this section the norm $Y$ will be either the norm $X$ of our Banach space or the norm $\|\cdot\|_\cD$ defined above.
In \cite{ST} (see Theorems 1.4, 1.5 and Corollary 1.2 there) we proved the following two Lebesgue-type inequalities.
 \begin{Theorem}[{\cite{ST}}]\label{QOT0.1} Assume that $\cD$ is an $M$-coherent dictionary. Then for $m\le 1/(3M)$ we have for the QOGA
 \begin{equation}\label{QO0.2}
 \|f_m\|_\cD \le 13.5\sigma_m(f)_\cD.
 \end{equation}
 \end{Theorem}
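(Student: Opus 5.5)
Our plan is to control how the $\|\cdot\|_\cD$-norm of the QOGA residual behaves in the $M$-coherent setting, comparing it with a near-best $m$-term approximant $h=\sum_{i\in\Lambda}b_ig_i$, $|\Lambda|\le m$, for which $\|f-h\|_\cD\le(1+\epsilon)\sigma_m(f)_\cD$. Put $w:=f-h$, so $\|w\|_\cD\le(1+\epsilon)\sigma:=\sigma_m(f)_\cD$.

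\emph{Step 1: coefficient stability.} Suppose $\varphi_1,\dots,\varphi_k$ ($k\le 2m\le 2/(3M)$) are distinct. The Gram-type matrix $[F_{\varphi_j}(\varphi_i)]$ then has unit diagonal, and each off-diagonal row sum is at most $(k-1)M<2/3$. By Gershgorin, for every $\psi=\sum c_i\varphi_i$ we get $\max_j|F_{\varphi_j}(\psi)|\ge (1-(k-1)M)\max|c_i|\ge \frac13\max|c_i|$. We also have $\|\sum c_i\varphi_i\|_\cD\le (1+(k-1)M)\max|c_i|$. This is the usual Riesz-type substitute for coherence in the $\ell_\infty$ and $\|\cdot\|_\cD$ scale.

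\emph{Step 2: induction on the iterations.} Write $f_k=f-G_k=(h-G_k)+w$, where $h-G_k$ is supported on $\Lambda\cup\{\varphi_1,\dots,\varphi_k\}$, which has size at most $2m$. By construction $F_{\varphi_j}(f_k)=0$ for $j\le k$, so $|F_{\varphi_j}(h-G_k)|\le\|w\|_\cD$ for those $j$. We want to show the following: as long as some index of $\Lambda$ not yet selected carries a coefficient of $h-G_k$ that is large compared with $\sigma$, the greedy choice $\varphi_{k+1}$ (maximizing $|F_g(f_k)|$) lands in $\Lambda$. Let $d$ be the largest coefficient of $h-G_k$. Then a correct element $g$ satisfies $|F_g(f_k)|\ge d(1-2mM)-\sigma'$, while an element outside the support satisfies $|F_g(f_k)|\le 2mMd+\sigma'$. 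Since $2mM\le 2/3$, this comparison does not close on its own. We will therefore refine the bound using that the coefficients on already selected indices are small. By the orthogonality relations and Step 1 applied to the selected block, those coefficients are $O(\sigma+Md\cdot(\text{size of the unselected part}))$. The aim is to show that every iteration either picks a new element of $\Lambda$, or already has $d\lesssim\sigma$. Hence after at most $m$ iterations $d\le C\sigma$ holds, and then $\|f_m\|_\cD\le\|h-G_m\|_\cD+\|w\|_\cD\le (1+2mM)C\sigma+\sigma$. Tracking the constants with $mM\le1/3$ should give a bound of the form $13.5\sigma$.

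\emph{Main obstacle.} The delicate point is Step 2. Elements selected outside $\Lambda$ enlarge the support, and the coherence margin at $mM\le 1/3$ is too thin for a naive comparison. I would first try a potential argument in which $d_k$ (the max coefficient on unselected $\Lambda$-indices) is tracked separately from $e_k$ (the max coefficient on selected indices). The quasi-orthogonality conditions give $e_k\le \frac{3}{2}(\sigma'+M|\Lambda\setminus S_k|d_k)$ via Step 1. With this, a wrong selection forces $d_k\le c\sigma$, and the explicit constant $13.5$ should emerge from optimizing these two linear inequalities at $mM=1/3$.
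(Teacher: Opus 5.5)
The paper does not prove this theorem; it quotes it from \cite{ST}, so I judge your proposal on its own merits. Step~1 is correct, and so is the quasi-orthogonality estimate $e_k\le(\sigma'+Mu_kd_k)/(1-(k-1)M)$, where $\sigma':=(1+\epsilon)\sigma_m(f)_\cD$, $T_k:=\{\varphi_1,\dots,\varphi_k\}$ and $u_k:=|\Lambda\setminus T_k|$. But Step~2 has a genuine gap exactly at the obstacle you flag. Your claim that ``a wrong selection forces $d_k\le c\sigma$'' does not follow from your two inequalities at a general step, and the $e_k$ bound does not fix the enlarged-support problem. Compare the unselected $g_0\in\Lambda$ with $|b_{g_0}|=d_k$ against a wrongly chosen element. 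This gives $d_k(1-(2u_k-1)M)\le 2Mk\,e_k+2\sigma'$, and inserting the bound on $e_k$ yields
\[
d_k\bigl[1-(2u_k+k-2)M-(2u_k+k-1)M^2\bigr]\le(2+2M)\sigma' .
\]
Suppose earlier picks fell outside $\Lambda$, say all $k=m-1$ of them, so that $u_k=m$. Then at $mM=1/3$ the bracket equals $2M(1+M)$, and you only get $d_k\le\sigma'/M=3m\sigma'$. No optimization of these two linear inequalities gives an absolute constant there. There is a second problem: with your first definition of $d$ (the largest coefficient of $h-G_k$ over all indices), $d$ is not monotone in $k$. So knowing $d\lesssim\sigma$ at some iteration does not give $d\le C\sigma$ at iteration $m$.

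The missing idea is to use monotonicity and compare only once. The algorithm never changes the coefficients $b_g$ of $h$ on $\Lambda\setminus T_k$, and this set only shrinks, so $d_k:=\max_{g\in\Lambda\setminus T_k}|b_g|$ is non-increasing. It therefore suffices to examine the \emph{first} pick outside $\Lambda$ made while $\Lambda\setminus T_k\neq\emptyset$; if there is no such pick, then $d_m=0$. At that step $T_k\subset\Lambda$, so $k+u_k=|\Lambda|\le m$ and $2u_k+k\le 2m$. The bracket is then at least $\frac13+\frac43M$, which gives $d_k\le 6\sigma'$ and hence $d_m\le 6\sigma'$. Next, $e_m\le\frac32(\sigma'+\frac13 d_m)\le\frac92\sigma'$, and $F_g(f_m)=0$ for $g\in T_m$. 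For $g\notin T_m$,
\[
|F_g(f_m)|\le(1+Mu_m)d_m+Mm\,e_m+\sigma'\le 8\sigma'+\tfrac32\sigma'+\sigma'=\tfrac{21}{2}\sigma' .
\]
Letting $\epsilon\to0$ gives the claimed bound, in fact with $10.5$ in place of $13.5$. This argument uses $F_g(g)=1$. It also uses that the functionals $F_{\varphi_j}$ used by the algorithm are the ones entering $\|\cdot\|_\cD$, which holds for example when norming functionals are unique.
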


 \begin{Theorem}[{\cite{ST}}]\label{QOT0.2} Assume that $\cD$ is an $M$-coherent dictionary in a Banach space $X$. There exists an absolute constant $C$   such that for $m\le 1/(3M)$ we have for the QOGA
$$
\|f_m\|_X \le C \inf_{g\in\Sigma_m(\cD)}(\|f-g\|_X + m\|f-g\|_\cD).
$$
\end{Theorem}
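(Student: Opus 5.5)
The plan is to derive Theorem \ref{QOT0.2} from the $\|\cdot\|_\cD$-Lebesgue inequality of Theorem \ref{QOT0.1}. The key extra ingredient is an elementary comparison between $\|\cdot\|_X$ and $\|\cdot\|_\cD$ on $2m$-sparse elements of an $M$-coherent dictionary. As usual for $M$-coherent dictionaries, I assume the elements are normalized, $\|g\|_X=1$, so that $F_g(g)=1$ and $|F_g(h)|\le M$ for $g\neq h$.

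\textbf{Step 1 (decomposition).} Fix an arbitrary $g\in\Sigma_m(\cD)$ and set $h:=f-g$. Let $G_m=\sum_{i=1}^m c_i^m\varphi_i$ be the QOGA approximant. Then
$$
f_m = f-G_m = h + (g-G_m),
$$
and $u:=g-G_m$ is a linear combination of at most $2m$ distinct dictionary elements. Hence $\|f_m\|_X\le \|h\|_X+\|u\|_X$, and it remains to control $\|u\|_X$ by $m\|h\|_\cD$.

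\textbf{Step 2 (norm comparison on sparse elements).} Write $u=\sum_{i\in T}b_i\psi_i$ with $|T|\le 2m$. On one side, $\|u\|_X\le \sum_{i\in T}|b_i|\le 2m\max_i|b_i|$.

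On the other side, pick $j$ with $|b_j|=\max_i|b_i|$. Then
$$
\|u\|_\cD\ge |F_{\psi_j}(u)|\ge |b_j|-M\sum_{i\ne j}|b_i|\ge |b_j|\bigl(1-(2m-1)M\bigr).
$$
Since $m\le 1/(3M)$, we have $(2m-1)M\le 2/3$, so $\max_i|b_i|\le 3\|u\|_\cD$. Combining the two sides,
$$
\|u\|_X\le 6m\|u\|_\cD.
$$

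\textbf{Step 3 (bounding $\|u\|_\cD$).} Since $u=g-G_m=f_m-h$, we get $\|u\|_\cD\le \|f_m\|_\cD+\|h\|_\cD$. By Theorem \ref{QOT0.1} (valid because $m\le 1/(3M)$),
$$
\|f_m\|_\cD\le 13.5\,\sigma_m(f)_\cD\le 13.5\|f-g\|_\cD,
$$
using that $g\in\Sigma_m(\cD)$. Hence $\|u\|_X\le 6m\cdot 14.5\|h\|_\cD$, and therefore
$$
\|f_m\|_X\le \|f-g\|_X+87\,m\|f-g\|_\cD\le C\bigl(\|f-g\|_X+m\|f-g\|_\cD\bigr).
$$
Taking the infimum over $g\in\Sigma_m(\cD)$ gives the theorem.

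The only genuinely nontrivial input is Theorem \ref{QOT0.1}. The main point to check is the norm comparison of Step 2: it needs the normalization $F_\psi(\psi)=1$ and the restriction $m\le 1/(3M)$, which keeps $1-(2m-1)M$ bounded below by the absolute constant $1/3$. If the dictionary were only assumed to satisfy $\|g\|\le1$, one would first renormalize the elements, since the coherence estimate requires unit norms.
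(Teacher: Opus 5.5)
Your proof is correct and matches the approach behind this result in \cite{ST}; the paper itself only quotes the theorem. You split $f_m=(f-g)+(g-G_m)$, use coherence to get $\|g-G_m\|_X\le 2m\max_i|b_i|\le 6m\|g-G_m\|_\cD$ for the $2m$-sparse difference, and then bound $\|g-G_m\|_\cD\le \|f_m\|_\cD+\|f-g\|_\cD$ via the $\|\cdot\|_\cD$-Lebesgue inequality (\ref{QO0.2}) of Theorem~\ref{QOT0.1}, which is the implication from (\ref{QO0.2}) to the $\|\cdot\|_X$ bound that the paper alludes to.
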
 
\begin{Corollary}[{\cite{ST}}]\label{QOC0.2} Using the inequality $\|g\|_\cD \le \|g\|_X$ we obtain from Theorem \ref{QOT0.2}
$$
\|f_m\|_X \le C(1+m) \sigma_m(f)_X.
$$
\end{Corollary}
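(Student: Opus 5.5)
The corollary follows from Theorem \ref{QOT0.2} by comparing the two norms. I first verify the inequality $\|g\|_\cD \le \|g\|_X$ for every $g\in X$. For any $h\in\cD$ and any norming functional $F_h$ we have $\|F_h\|_{X^*}=1$. Hence $|F_h(g)|\le \|F_h\|_{X^*}\|g\|_X=\|g\|_X$, and taking the supremum over $h\in\cD$ and over all $F_h$ gives $\|g\|_\cD\le\|g\|_X$.

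Next I plug this into the right-hand side of Theorem \ref{QOT0.2}. For each $g\in\Sigma_m(\cD)$,
$$
\|f-g\|_X+m\|f-g\|_\cD\le (1+m)\|f-g\|_X .
$$
Taking the infimum over $g\in\Sigma_m(\cD)$ then gives, for $m\le 1/(3M)$,
$$
\|f_m\|_X\le C\inf_{g\in\Sigma_m(\cD)}(1+m)\|f-g\|_X=C(1+m)\sigma_m(f)_X ,
$$
which is the claim. Implicitly, the range $m\le 1/(3M)$ carries over from Theorem \ref{QOT0.2}.

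There is no real obstacle. The only points to watch are the normalization of the norming functional, which is what makes $\|F_h\|_{X^*}=1$ available, and the harmless observation that the infimum of a constant multiple equals that constant times the infimum.
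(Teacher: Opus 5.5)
Your proposal is correct and follows the paper's argument: you bound $\|f-g\|_\cD\le\|f-g\|_X$ inside the infimum of Theorem \ref{QOT0.2}, which stays restricted to $m\le 1/(3M)$, and take the factor $1+m$ outside. Your check of $\|g\|_\cD\le\|g\|_X$ via $\|F_h\|_{X^*}=1$ supplies the one-line justification that the paper leaves implicit.
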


\begin{Theorem}[{\cite{ST}}]\label{QOT1.5} Assume that $\cD$ is an $M$-coherent dictionary in a Hilbert space $\mathcal{H}$. Then for $m\le \frac{2}{3}\frac{t}{1+t}\frac{1}{M}$ we have the WQOGA
$$
\|f_m\|_\mathcal{H} \le C_6(t)\inf_{g\in\Sigma_m(\cD)}(\|f-g\|_\mathcal{H} + m^{1/2}\|f-g\|_\cD).
$$
\end{Theorem}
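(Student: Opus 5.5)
The plan is to reduce the Hilbert space statement to the two-norm Lebesgue-type structure of the QOGA results, adapted to the weakness parameter $t$ and to the $\ell_2$-type control available in $\mathcal{H}$. Fix $g=\sum_{i\in A}b_i\psi_i\in\Sigma_m(\cD)$ with $|A|\le m$, and write $f=g+w$, where $w:=f-g$. The quantities we must control are then $\|w\|_\mathcal{H}$ and $\|w\|_\cD$. In a Hilbert space $F_h(\cdot)=\<\cdot,h\>$, so the WQOGA coefficient equations (\ref{QO9.1}) become Gram-type systems $\sum_i c_i^m\<\varphi_i,\varphi_j\>=\<f,\varphi_j\>$. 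These systems coincide with the OMP normal equations; note, however, that the WQOGA selection is made with respect to the quasi-orthogonal residual.

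\textbf{Step 1 (support identification under the perturbation).} Show by induction on the iteration $k\le m$ that, as long as the "signal part" of the residual is large compared with $\|w\|_\cD$, the chosen $\varphi_k$ lies in $A$. I would reuse the coherence argument behind Theorem \ref{QOT1} (the bound $S<\frac{t}{1+t}(1+1/M)$), now keeping track of the extra term $|\<w,h\>|\le\|w\|_\cD$ for every $h\in\cD$. The residual is $f_k=\sum_{i\in A}b_i^{(k)}\psi_i+w-P_k w$, where $P_kw$ denotes the part of $w$ absorbed by the Gram solve. Write $\beta:=\max_i|b_i^{(k)}|$. The largest correlation with an element of $A$ is at least $\beta(1-(m-1)M)-C\|w\|_\cD$, while the correlation with any $h\notin A$ is at most $\beta mM+C\|w\|_\cD$. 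The condition $m\le\frac23\frac{t}{1+t}\frac1M$ is exactly what makes $t$ times the first bound exceed the second once $\beta\gtrsim\|w\|_\cD$. The constant $C$ requires bounding the coefficients of $P_kw$, and this uses the Gram matrix of $\le 2m$ atoms having eigenvalues in $[1-2mM,1+2mM]\subset[1/3,5/3]$.

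\textbf{Step 2 (stopping in the bad case).} Let $k_0\le m$ be the first iteration at which $\beta\le C'\|w\|_\cD$, or set $k_0=m$ if this never happens. Since the residual at step $m$ is at least as good as at step $k_0$, it is enough to bound $\|f_{k_0}\|$. Up to that point all selected atoms lie in $A$, so $f_{k_0}=\sum_{i\in A}b_i^{(k_0)}\psi_i+(w-P_{k_0}w)$. The signal part is bounded, by the Riesz bound from coherence, by $(1+mM)^{1/2}m^{1/2}\beta\lesssim m^{1/2}\|w\|_\cD$. The noise part is bounded by $\|w\|_\mathcal{H}+\|P_{k_0}w\|_\mathcal{H}$. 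Since $P_{k_0}w=\sum_i d_i\varphi_i$ with $d$ solving a well-conditioned Gram system whose right-hand side has entries $\<w,\varphi_j\>$, we get $\|P_{k_0}w\|\lesssim\|d\|_2\lesssim k_0^{1/2}\|w\|_\cD$. Combining these bounds gives $\|f_m\|\le C_6(t)(\|w\|_\mathcal{H}+m^{1/2}\|w\|_\cD)$, and taking the infimum over $g$ finishes the proof.

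I expect the main obstacle to be Step 1. The reason is that the quasi-orthogonal projection is not the orthogonal one: the residual is orthogonal to the selected atoms only in the sense of the equations (\ref{QO9.1}), which in Hilbert space happen to be genuine orthogonality. The delicate part is therefore the quantitative bookkeeping of the updated coefficients $b^{(k)}$ and of the noise leakage under the weakness factor $t$. To handle this, I would first prove a clean lemma: for any index set of size $\le 2m$, the Gram inverse has $\ell_\infty\to\ell_\infty$ norm at most $(1-2mM)^{-1}$. All the estimates above would then be derived from that lemma.
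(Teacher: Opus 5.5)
Your proof is correct. Note that the paper itself contains no proof of this theorem. It is quoted from \cite{ST}, and the only hint about the method is the remark that the \cite{ST} technique rests on the $\|\cdot\|_\cD$-Lebesgue inequality (\ref{QO0.2}).

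Your argument runs on the same mechanism, in a direct, Hilbert-specific form. Instead of first proving a Lebesgue-type inequality for $\|f_k\|_\cD$ and then converting it to the $\mathcal{H}$-norm, you track $\beta=\max_i|b_i^{(k)}|$ of the sparse part directly. You show by coherence estimates that atoms outside $A$ cannot pass the weak selection test while $\beta\gtrsim_t\|w\|_\cD$; the margin $t(1-(m-1)M)-mM\ge t/3$ under $m\le\frac23\frac{t}{1+t}\frac1M$ is exactly what makes this work. You then turn $\beta\lesssim\|w\|_\cD$ into the $\mathcal H$-bound $(1+mM)^{1/2}m^{1/2}\beta$. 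This $\ell_2$ step is precisely where $m^{1/2}$ appears. In the Banach setting of Theorem \ref{QOT0.2} one only has the $\ell_1$ bound $m\beta$, hence the factor $m$. Your route gains brevity and transparency. The \cite{ST} route gains that a single $\|\cdot\|_\cD$ inequality yields both the Banach and the Hilbert results.

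A few tidy-ups:
\begin{itemize}
\item In $\mathcal H$ the equations (\ref{QO9.1}) are the normal equations, so the WQOGA is the WOMP. Your concern that the quasi-orthogonal projection differs from the orthogonal one therefore does not arise.
\item Because $P_{k_0}$ is an orthogonal projection, $\|w-P_{k_0}w\|_{\mathcal H}\le\|w\|_{\mathcal H}$ holds directly. Your estimate of $\|P_{k_0}w\|$ is unnecessary.
\item The Gram-inverse $\ell_\infty\to\ell_\infty$ bound is needed only for the leakage term $|\langle P_kw,h\rangle|\le kM(1-(k-1)M)^{-1}\|w\|_\cD$ in Step 1, and only for the $k\le m$ selected atoms, not $2m$ atoms.
\item You should close the case where the threshold is never crossed rather than simply set $k_0=m$. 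The residual is orthogonal to the already selected atoms, so the selected atoms are distinct. If all $m$ of them lie in $A$ with $|A|\le m$, then $A$ is exhausted, $g\in\mathcal H_m$ and $f_m=w-P_mw$, so $\beta^{(m)}=0$.
\item You implicitly use $\|g\|=1$ for $g\in\cD$, so that $F_g=\langle\cdot,g\rangle$ and the Gram diagonal equals $1$. This is the standard normalization for $M$-coherent dictionaries, but it should be stated.
\end{itemize}
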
 

\begin{Corollary}[{\cite{ST}}]\label{QOC2} Using the inequality $\|g\|_\cD \le \|g\|_X$ we obtain from Theorem \ref{QOT1.5} that under condition $m\le \frac{2}{3}\frac{t}{1+t}\frac{1}{M}$ we have for the WOGA in a Hilbert space $\mathcal{H}$
$$
\|f_m\|_\mathcal{H} \le C(1+m^{1/2}) \sigma_m(f)_\mathcal{H}.
$$
\end{Corollary}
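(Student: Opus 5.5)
Corollary \ref{QOC2} follows directly from Theorem \ref{QOT1.5}: all that is needed is to dominate the $\|\cdot\|_\cD$ term on its right-hand side by the Hilbert norm. Fix $m\le \frac{2}{3}\frac{t}{1+t}\frac{1}{M}$, so that Theorem \ref{QOT1.5} applies and gives
$$
\|f_m\|_\mathcal{H} \le C_6(t)\inf_{g\in\Sigma_m(\cD)}\bigl(\|f-g\|_\mathcal{H} + m^{1/2}\|f-g\|_\cD\bigr).
$$
(Here $f_m$ denotes the residual of the algorithm in Theorem \ref{QOT1.5}; in a Hilbert space this is the quasi-orthogonal algorithm with $F_g(h)=\<h,g\>/\|g\|$, and we read the statement's ``WOGA'' in that sense.)

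The key step is the inequality $\|h\|_\cD\le \|h\|_\mathcal{H}$ for every $h\in\mathcal{H}$. In the Hilbert setting, $\|h\|_\cD=\sup_{g\in\cD}|\<h,g\>|$. By Cauchy--Schwarz, $|\<h,g\>|\le \|h\|\,\|g\|\le \|h\|$, because dictionary elements satisfy $\|g\|\le 1$. In the Banach-space form of the norm, the same bound holds since every norming functional has $\|F_g\|_{X^*}=1$, so $|F_g(h)|\le\|h\|$.

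Applying this with $h=f-g$ bounds each term inside the infimum by $(1+m^{1/2})\|f-g\|_\mathcal{H}$. Taking the infimum over $g\in\Sigma_m(\cD)$ gives
$$
\|f_m\|_\mathcal{H}\le C_6(t)(1+m^{1/2})\,\sigma_m(f)_\mathcal{H},
$$
which is the claim with $C=C_6(t)$.

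There is no real obstacle here. The only point requiring care is that the unit-norm normalization of the dictionary is what makes the $\cD$-norm bounded by the Hilbert norm. If the dictionary were normalized only by $\|g\|\le B$, the same argument would go through with the constant multiplied by $B$.
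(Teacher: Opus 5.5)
Your proposal is correct and is exactly the paper's argument: insert $\|f-g\|_\cD\le\|f-g\|_\mathcal{H}$ (from Cauchy--Schwarz with $\|g\|\le 1$, or from $\|F_g\|_{X^*}=1$) into the infimum of Theorem~\ref{QOT1.5}, then take the infimum over $g\in\Sigma_m(\cD)$; this gives the claim with $C=C_6(t)$. Your remark that, in a Hilbert space with normalized dictionary elements, the WQOGA coincides with the WOGA is the same identification the paper relies on implicitly.
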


Inequality (\ref{QO0.2}) is a perfect (up to a constant 13.5) Lebesgue-type inequality. It indicates that the norm $\|\cdot\|_\cD$ used in that inequality is a suitable norm for analyzing performance of the QOGA. Corollary \ref{QOC0.2} shows that the Lebesgue-type inequality (\ref{QO0.2}) in the norm $\|\cdot\|_\cD$ implies the Lebesgue-type inequality in the norm $\|\cdot\|_X$. 
We do not know if the Lebesgue-type inequality in Corollary  \ref{QOC0.2}
 is sharp. We know that Corollary  \ref{QOC2} is sharp.  It is known (see \cite{GMS}) that the factor $(1 + m^{1/2})$  cannot
be replaced by a slower growing in $m$ factor.
  Thus, in the case of Hilbert spaces, the technique from \cite{ST} based on the general
inequality (\ref{QO0.2}) provides sharp Lebesque-type inequalities.

{\bf Comment \ref{QO}.1.} In the paper \cite{ST} we gave an argument in favour of consideration of greedy approximation in Banach spaces. We introduced a concept of $M$-coherent dictionary in a Banach space which is a generalization of the corresponding concept in a Hilbert space.  We analysed the Quasi-Orthogonal Greedy Algorithm (QOGA), which is a generalization of the Orthogonal Greedy Algorithm (Orthogonal Matching Pursuit) for Banach spaces. It is known (see \cite{VT107}) that the QOGA recovers exactly $S$-sparse signals after $S$ iterations provided $S<(1+1/M)/2$. This result is well known for the Orthogonal Greedy Algorithm in Hilbert spaces. The following question is of great importance: Are there dictionaries in $\mathbb R^n$ such that their coherence in $\ell_p^n$ is less than their coherence in  $\ell_2^n$ for some $p\in (1,\infty)$? In Section 3 of \cite{ST} we showed that the answer to the above question is "yes". Thus, for such dictionaries, replacing the Hilbert space $\ell_2^n$ by a Banach space $\ell_p^n$, we improve an upper bound for sparsity that guarantees an exact recovery of a signal. We note that the computational complexity of the QOGA in the case 
  of $\ell_p^n, 1<p<\infty$, is close to that of the OGA in the case of $\ell_2^n$ because we can write the formula for the norming functional explicitly.


\begin{thebibliography}{xxxx}

\bibitem{App11}
Applebaum, Lorne, WaheedU. Bajwa, A. Robert Calderbank, et al. “Deterministic pilot sequences for sparse channel estimation in OFDM systems.” IEEE 2011 17th International Conference on Digital Signal Processing (DSP), 2011

\bibitem{Bour} J. Bourgain, An improved estimate in the restricted isometry problem, In Geometric Aspects of Functional Analysis, volume 2116 of Lecture Notes in Mathematics, pages 65--70. Springer, 2014.

 \bibitem{BDJR} S. Brugiapaglia, S. Dirksen, H.C. Jung, and H. Rauhut,  Sparse recovery in bounded Riesz systems with applications to numerical methods for PDEs, Applied and Computational Harmonic Analysis, {\bf 53}  (2021), 231-269.







\bibitem{DPTT} F. Dai, A. Prymak, V.N. Temlyakov, and S.U. Tikhonov, Integral norm discretization and related problems, {\it Uspekhi Mat. Nauk}, {\bf 74} (2019), no. 4(448), 3--58; translation in {\it Russian Math. Surveys}, {\bf 74} (2019), no. 4, 579--630.

\bibitem{DKT} F. Dai, E. Kosov, and V. Temlyakov, A survey of sampling discretization and related topics, submitted (2023).

 \bibitem{DTM2} F. Dai and V.N. Temlyakov, Random points are good for universal discretization, J. Math. Anal. Appl., {\bf 529} (2024) 127570; arXiv.2301.12536[math.FA] 5 Feb 2023.


\bibitem{VT165} A.V. Dereventsov and V.N. Temlyakov, A unified way of analyzing some greedy algorithms, Journal of Functional Analysis, {\bf 277}(12) (2019), 108286.



\bibitem{DET}  D. Donoho, M. Elad and V.N. Temlyakov,  On the Lebesgue type inequalities for greedy approximation,  J. Approximation Theory, {\bf 147} (2007), 185--195.



\bibitem{FR} S. Foucart and H. Rauhut, A Mathematical Introduction to Compressive Sensing,
  Birkh{\"a}user, 2013. 


 
\bibitem{GMS}  A.C. Gilbert, S. Muthukrishnan and M.J. Strauss, Approximation of functions over redundant dictionaries using coherence, {\em The 14th Annual ACM-SIAM Symposium on Discrete Algorithms},  2003. 

   \bibitem{HR} I. Haviv and O. Regev, The restricted isometry property of subsampled Fourier matrices, In Geometric aspects of functional analysis, volume 2169 of Lecture Notes in Math., pages 163?179. Springer, Cham, 2017.
   
   
   \bibitem[HL18]{HL18} 
Heath Jr R. W., Lozano A. Foundations of MIMO communication. Cambridge University Press, 2018.






\bibitem{KKLT} B.S. Kashin, E. Kosov, I. Limonova, and V.N. Temlyakov, Sampling discretization and related problems, {\it J. Complexity}, {\bf 71} (2022), 101653.


\bibitem{LMT} I. Limonova, Yu. Malykhin, and V. Temlyakov, One-sided discretization inequalities and sampling recovery, {\it Uspekhi Mat. Nauk}, {\bf 79} (2024), no. 3(477), 149--180.


\bibitem{LivTem} E.D. Livshitz and V.N. Temlyakov, Sparse approximation and recovery by greedy algorithms, 
IEEE Transactions on Information Theory, {\bf 60} (2014), 3989--4000; arXiv:1303.3595v1 [math.NA] 14 Mar 2013.


 \bibitem{S} D. Savu, Sparse Approximation in Banach Spaces, Ph. D. Dissertation, University of South Carolina (2009).

\bibitem{ST} D. Savu and V. Temlyakov, Lebesgue-type inequalities for greedy approximation in Banach spaces, IEEE Transactions on Information Theory, {\bf 59} (2013), 1098--1106. 



\bibitem{T1} V.N. Temlyakov,  \emph{Weak greedy algorithms},
Adv. Comput. Math. \textbf{12}  (2000), 213--227.



\bibitem {VT107}  V.N. Temlyakov, Greedy Approximations,   Foundations of Computational Mathematics, Santander 2005,  London Mathematical Society Lecture Notes Series, {\bf 331}, Cambridge University Press, 371--394 (2006). 

\bibitem{VT115}  V.N. Temlyakov, Relaxation in greedy approximation, {\em  Constructive Approximation}, {\bf 28}   (2008), 1--25.

  \bibitem{VTbook} V.N. Temlyakov, Greedy Approximation, Cambridge University
Press, 2011.



\bibitem{VTbookMA} V.N. Temlyakov,   Multivariate Approximation, Cambridge University Press, 2018.


\bibitem{VT211} V.N. Temlyakov, Brief introduction in greedy approximation, Uspekhi Mat. Nauk, {\bf 80} (2025), 23--104; arXiv:2502.13432v1 [math.NA] 19 Feb 2025.



\bibitem{Tr06}
    J.~Tropp,
    ``Just Relax: Convex Programming Methods for Identifying Sparse Signals in Noise'',
    \textit{IEEE Trans. Inf.Th.}, \textbf{52}:3 (2006).

\bibitem{TGS06}
    J.~Tropp, A.~Gilbert, and M.~Strauss,
    ``Algorithms for simultaneous sparse approximation part I: greedy pursuit'',
    \textit{Signal processing}, \textbf{86}:3 (2006), 589--602.



\bibitem{XRS}
He, Xueyun, Rong fang Song, and Wei-Ping Zhu. \emph{Pilot allocation for distributed-compressed-sensing-based sparse channel estimation in MIMO-OFDM systems}, IEEE Transactions on Vehicular Technology 65.5 (2015): 2990-3004.


\bibitem{Z} T. Zhang, \emph{Sparse Recovery with Orthogonal Matching Pursuit under RIP}, IEEE Transactions on Information Theory, {\bf 57} (2011), 6215--6221.


\bibitem{Zh16} Yi Zhang, et al. ``Novel Compressed Sensing-Based Channel Estimation Algorithm and Near-Optimal Pilot Placement Scheme'', IEEE Transactions on Wireless Communications, Vol 15, No.4, 2016



\bibitem[3GPP 38.901]{3GPP38901}
    3rd Generation Partnership Project,
    Technical Specification Group Radio Access Network;
    Study on channel model for frequencies from 0.5 to 100 GHz
    (Release 19).


\end{thebibliography}
\end{document}